\newtheorem{Theorem}{Theorem}[section]
\newtheorem{lemma}[Theorem]{Lemma}
\newtheorem{proposition}[Theorem]{Proposition}
\newtheorem{corollary}[Theorem]{Corollary}
\newtheorem{definition}[Theorem]{Definition}
\newtheorem{example}[Theorem]{Example}
\newcommand{\ZZ}{\mathbb {Z}}
\newcommand{\CC}{\mathbb {C}}
\newcommand{\Se}{S_{\epsilon}^{3}}
\providecommand{\keywords}[1]
{
  \small	
  \textbf{Keywords :} #1
}
\begin{document}

\title{Algebraic links in lens spaces}

\author{Eva Horvat\\
University of Ljubljana, Faculty of Education \\
Kardeljeva plo\v s\v cad 16, 1000 Ljubljana, Slovenia\\
eva.horvat@pef.uni-lj.si}


\maketitle

\begin{abstract}
The lens space $L_{p,q}$ is the orbit space of a $\ZZ _{p}$-action on the three sphere. We investigate polynomials of two complex variables that are invariant under this action, and thus define links in $L_{p,q}$. We study properties of these links, and their relationship with the classical algebraic links. We prove that all algebraic links in lens spaces are fibered, and obtain results about their Seifert genus. We find some examples of algebraic knots in $L_{p,q}$, whose lift in the $3$-sphere is a torus link.  
\end{abstract}

\keywords{complex singularities, links in lens spaces, algebraic links}

\section {Introduction}
\label{sec0}
Classical algebraic links have been studied almost a century ago \cite{BR, BU, KA}. Complex algebraic curves and links of their singularities present a field where complex algebraic geometry meets topology and knot theory. Because of their rich (and at the same time rigid) structure, they might still present a valuable insight into differential geometric structures (such as contact structures) on the ambient 3-manifold. \\

Our interest lies in exploring links in lens spaces, which is a relatively young branch of knot theory. As the classical algebraic links in the 3-sphere represent a special class of links (so-called \textit{iterated torus links}), we would like to know which links play the corresponding role in lens spaces. Do they share the topological properties of the classical algebraic links? Can we find examples of algebraic links in $L_{p,q}$ using our knowledge about their lifts in the 3-sphere?   \\

The lens space $L_{p,q}$ is the orbit space of an action of the cyclic group $G_{p,q}\cong \ZZ _{p}$ on $S^{3}$. We define polynomials $f(x,y)$ of two complex variables, whose corresponding variety $V=f^{-1}(0)$ is invariant under this action. A classical link $\widetilde{K}$ of such a polynomial is the lift of a link $K$ in the lens space $L_{p,q}$, which we call an \textbf{algebraic link}. We show that every algebraic link in $L_{p,q}$ bounds a smooth surface (Proposition \ref{prop2}, Lemma \ref{lemma5}) and is therefore nullhomologous. We investigate the relationship between the number of components of an algebraic link in a lens space and its lift in the 3-sphere, and show that the lift of every algebraic knot in $L_{p,q}$ has $p$-components (Corollary \ref{cor3}). In Theorem \ref{th4}, we show that every algebraic link $K$ in a lens space $L_{p,q}$ defines a fibration $\Psi \colon L_{p,q}\backslash K\to S^{1}$. In Proposition \ref{prop4}, we obtain a formula that relates the Seifert genus of an algebraic knot in $L_{p,q}$ and that of its lift in $S^{3}$. In Corollary \ref{cor5}, we show that every torus link $T(a,b)$ with $\gcd (a,b)=p$ is the lift of an algebraic knot in $L_{p,q}$. \\

The paper is organized as follows. In Section \ref{sec1}, we summarize the background material which will be used in the paper. In Subsection \ref{subs11}, we recall some basic facts about links of complex curve singularities. Subsection \ref{subs12} contains the definition of lens spaces $L_{p,q}$ and recalls their construction using surgery. In Subsection \ref{subs13}, we briefly describe various diagrams of a link $K$ in $L_{p,q}$ and recall the procedure that turns a diagram of $K$ into the diagram of its lift in the 3-sphere. Section \ref{sec2} is the core of the paper. We define $(p,q)$-invariant polynomials of two complex variables and show that such a polynomial defines a link in the lens space $L_{p,q}$. We show that every algebraic link in $L_{p,q}$ is nullhomologous, and obtain implications about the number of its components. Moreover, we show that the space  $L_{p,q}\backslash K$ is a smooth fiber bundle over $S^{1}$, and obtain the relationship between the Seifert genus of $K$ and that of its lift $\widetilde{K}$ in $S^{3}$. In Section \ref{sec3} we determine which algebraic knots/links in $L_{p,q}$ are lifted to torus links in $S^{3}$, and show some examples. We conclude by listing several open problems in Section \ref{sec4}.

\section{Preliminaries}
\label{sec1}

Throughout the paper, we will denote by $\Se =\{(x,y)\in \CC ^{2}\, |\, |x|^{2}+|y|^{2}=\epsilon ^{2}\}$ the 3-sphere with radius $\epsilon $, centered at the origin of $\CC ^{2}$. As usual, $S^{3}_{1}$ will be denoted by $S^{3}$. 

\subsection{The link of a complex curve singularity}
\label{subs11}
Singularities of complex hypersurfaces in $\CC ^{n}$ were explored by Milnor \cite{MIL}. Here we recall some classical results about singular points of complex curves in $\CC ^{2}$.

Let $f\colon \CC ^{2}\to \CC $ be a non-constant polynomial in two complex variables with $f(0,0)=0$. We require that $f$ is either irreducible or it is a product of distinct irreducible polynomials.  Then $f$ defines an algebraic curve $V=f^{-1}(0)$ with a finite number of singular points. Denoting by $r$ the number of local analytic branches of $V$ passing through the origin, the intersection $$K=V\cap S_{\epsilon}^{3}$$ is a smooth compact 1-manifold; an $r$-component link in the 3-sphere $S_{\epsilon }^{3}$ \cite[Corollary 2.9]{MIL}. Moreover, the mapping $\Phi \colon S_{\epsilon }^{3}\backslash K\to S^{1}$, given by $\Phi (x,y)=\frac{f(x,y)}{|f(x,y)|}$, is the projection map of a smooth fiber bundle \cite[Theorem 4.8]{MIL}. 

\begin{definition} A link $K$ in $S^{3}$ is called \textbf{algebraic} if there exists a complex polynomial $f(x,y)$ with $f(0,0)=0$ and some $\epsilon >0$, such that the link $f^{-1}(0)\cap \Se $ is isotopic to $K$. 
\end{definition}

The simplest family of algebraic links consists of torus links. A \textbf{torus link} is a link that lies on the boundary of an unknotted solid torus in $S^{3}$. It is specified by a pair of integers $a$ and $b$ : the torus link $T(a,b)$ winds $a$ times along the meridian and $b$ times along the preferred longitude of the solid torus. The torus link $T(a,b)$ is an algebraic link, associated with the complex polynomial $f(x,y)=x^{a}+y^{b}$. \\

In general, links that arise at isolated singularities of complex curves are the so-called \textbf{ite\-ra\-ted torus links} (tubular links), that were initially studied in \cite{BR}, \cite{KA}, \cite{BU}. To help visualization, the sphere $S_{\epsilon }^{3}$ is usually represented by the union of two solid tori: 
\begin{xalignat}{1}\label{eq1}
& \left \{ (x,y)\in \CC ^{2}\, |\, |x|\leq |y|\textrm{ or }|y|\leq |x|\right \}=T_1 \cup T_2
\end{xalignat}
and by choosing suitable coordinates, we may assume that the link corresponding to $K$ lies within the solid torus $T_{1}$. 

Suppose that a branch of the algebraic curve $V$ has the Puiseux expansion 
\begin{xalignat*}{1}
& y=a_{1}x^{\frac{N_1}{m}}+a_{2}x^{\frac{N_2}{m}}+ a_{3}x^{\frac{N_3}{m}}+\ldots \;,
\end{xalignat*} where $m\leq N_{1}<N_{2}<N_{3}<\ldots $. The above expansion may be rewritten as
\begin{xalignat}{1}\label{eq2}
& y=a_{1}x^{\frac{n_1}{m_1}}+a_{2}x^{\frac{n_2}{m_1 m_2}}+ a_{3}x^{\frac{n_3}{m_1 m_2 m_3}}+\ldots \;,
\end{xalignat} 
where $m_{i}$ and $n_{i}$ are coprime numbers with $m_1\leq n_1\;, \, n_1 m_2 < n_2\;,\, n_2 m_3 < n_3\;,\ldots $. The component of $K$, arising from this branch, is isotopic to the knot $A_{k}$, where $k$ is the smallest integer for which $m_{1}m_{2}\ldots m_{k}=m$, and the knots $A_{i}$ are defined inductively as follows. $A_{1}$ is the torus knot $T(n_{1},m_{1})$ that lies on the boundary of the solid torus $T_1$. For $i\geq 2$, the knot $A_{i}$ lies on the boundary of the regular neighborhood $\nu _{i-1}$ of $A_{i-1}$, winding $n_{i}$ times along the meridian and $m_{i}$ times along the prefered longitude of $\nu _{i-1}$. The knot $A_{k}$ thus defined is sometimes denoted by the symbol $\{(m_{1},n_{1});\, (m_{2},n_{2});\, \ldots ;\, (m_{k},n_{k})\}$ \cite{RE}. 

\begin{figure}[h!]
\labellist
\normalsize \hair 2pt
\endlabellist
\begin{center}
\includegraphics[scale=0.25]{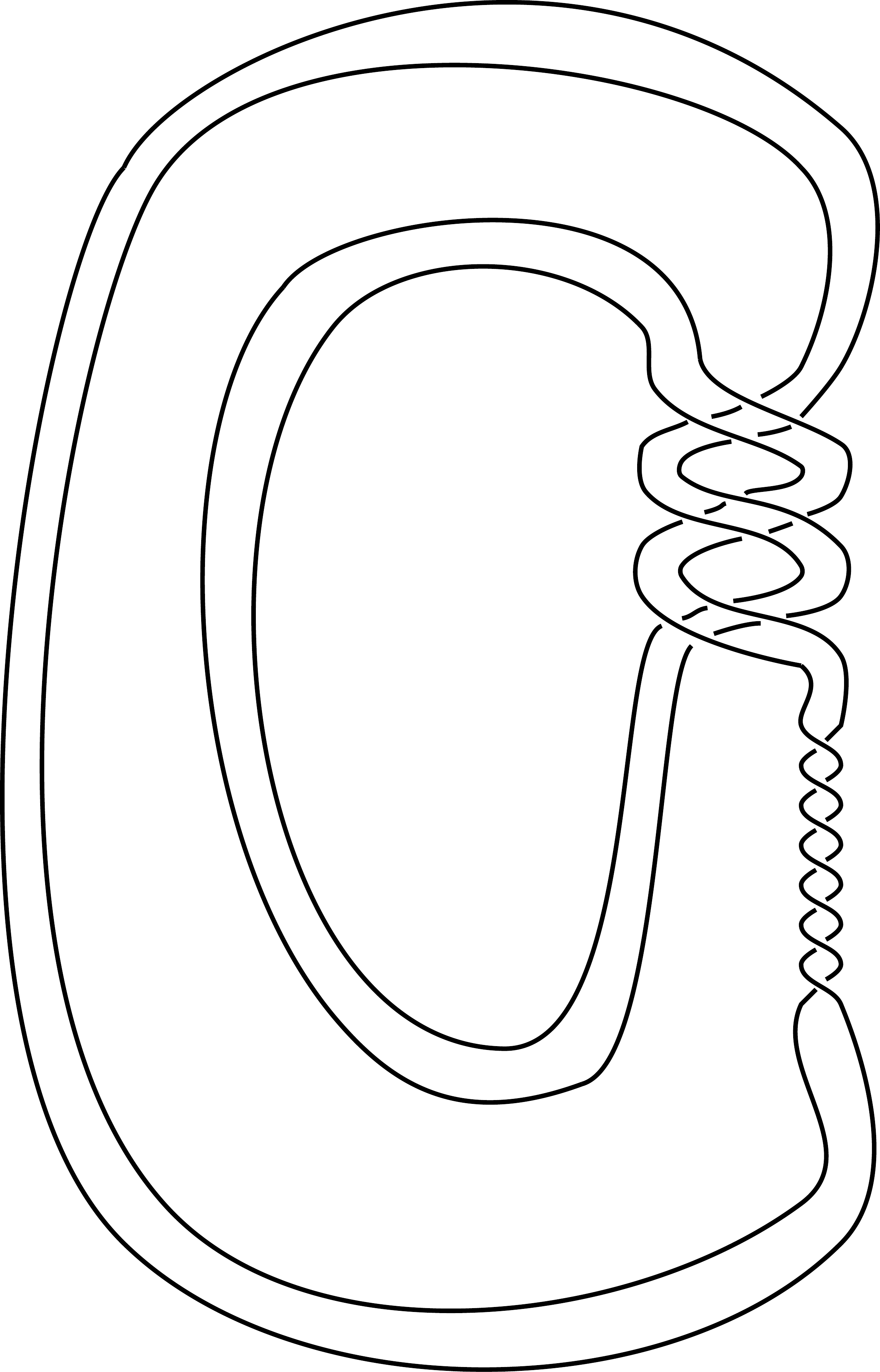}
\caption{An iterated torus knot}
\label{fig0}
\end{center}
\end{figure}

\subsection{Lens spaces}
\label{subs12}
Let $p$ and $q$ be relatively prime integers, and let $\zeta \in \CC $ be a primitive $p^{th}$ root of unity. Then $\zeta $ acts on the 3-sphere $\Se $ by 
\begin{xalignat}{1} \label{eq3}
& \zeta \cdot (x,y)=(\zeta x,\zeta ^{q}y)\;.
\end{xalignat}
Denote by $G_{p,q}$ the cyclic group $\langle \zeta \rangle \cong \ZZ _{p}$,  generated by this action. The orbit space $\Se /\langle \zeta \rangle $ is the lens space $L_{p,q}$. The quotient map $\pi \colon \Se \to L_{p,q}$ is a covering projection. Recall the following well-known result.

\begin{Theorem}\cite[Theorem 7.13]{LEE} \label{th1} Suppose $\widetilde{M}$ is a smooth manifold, and a discrete Lie group $\Gamma $ acts smoothly, freely, and properly on $\widetilde{M}$. Then $\widetilde{M}/\Gamma $ is a topological manifold and has a unique smooth structure such that $\pi \colon \widetilde{M}\to \widetilde{M}/\Gamma $ is a smooth covering map.
\end{Theorem}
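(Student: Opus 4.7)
The plan is to establish three things in sequence: first, that the quotient map $\pi \colon \widetilde{M}\to \widetilde{M}/\Gamma $ is a topological covering map, which will immediately give that $\widetilde{M}/\Gamma $ is a topological manifold; second, that pushing an atlas on $\widetilde{M}$ forward through local sections of $\pi $ defines a smooth structure on the quotient; and third, that this smooth structure is the unique one for which $\pi $ is a smooth covering map.

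First, I would use the hypotheses that $\Gamma $ is discrete and acts freely and properly to show that the action is properly discontinuous in the classical sense: for each $\tilde{x}\in \widetilde{M}$ there is an open neighborhood $U$ of $\tilde{x}$ with $gU\cap U=\emptyset $ for every $g\in \Gamma \setminus \{e\}$. Freeness rules out fixed points, and properness combined with the discreteness of $\Gamma $ (so that singletons $\{g\}$ are compact) lets one separate $\tilde{x}$ from the rest of its orbit, shrinking $U$ finitely many times. From this it follows that $\pi |_{U}\colon U\to \pi (U)$ is a homeomorphism and $\pi ^{-1}(\pi (U))=\bigsqcup _{g\in \Gamma }gU$ is an evenly-covered neighborhood of $\pi (\tilde{x})$. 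A further separation argument using properness shows that $\widetilde{M}/\Gamma $ is Hausdorff; second countability is inherited from $\widetilde{M}$.

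Next I would construct a smooth atlas on $\widetilde{M}/\Gamma $. For each $y\in \widetilde{M}/\Gamma $, pick $\tilde{x}$ with $\pi (\tilde{x})=y$ and a smooth chart $(U,\varphi )$ of $\widetilde{M}$ centered at $\tilde{x}$, with $U$ chosen small enough to be an evenly-covered slice as above. Then $(\pi (U),\, \varphi \circ (\pi |_{U})^{-1})$ is a chart on the quotient. To check smooth compatibility of two such charts coming from slices $U_{1},U_{2}$, note that on any connected overlap $\pi (U_{1})\cap \pi (U_{2})$ the two local sections differ by the action of a unique $g\in \Gamma $, i.e.\ $(\pi |_{U_{1}})^{-1}=g\circ (\pi |_{U_{2}})^{-1}$; since each $g$ acts as a diffeomorphism of $\widetilde{M}$ by hypothesis, the resulting transition map between $\varphi _{1}(U_{1})$ and $\varphi _{2}(U_{2})$ is a composition of smooth maps, hence smooth. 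This is the main technical step, and the one place where the \emph{smoothness} (not merely continuity) of the action is essential.

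Finally, with this atlas the map $\pi $ is by construction a smooth local diffeomorphism with discrete fibers, hence a smooth covering map. For uniqueness, any smooth structure on $\widetilde{M}/\Gamma $ making $\pi $ a smooth covering map must agree with the constructed one on each evenly-covered open set—because on such a set $\pi $ admits a smooth local inverse, forcing the charts to coincide up to smooth reparametrization—so the two structures define the same maximal atlas. The main obstacle I anticipate is purely technical: arranging the slice neighborhoods so that they are simultaneously coordinate domains and carefully verifying that properness genuinely furnishes the separation needed for Hausdorffness of the quotient, which is the subtle point where all three assumptions (discreteness, freeness, properness) interact.
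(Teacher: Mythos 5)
Your outline is correct and is essentially the proof found in the cited source: the paper itself offers no proof of this statement, quoting it directly from Lee's Quotient Manifold Theorem, and your three steps (properness plus freeness plus discreteness giving a properly discontinuous action, hence a covering map and a Hausdorff, second countable, locally Euclidean quotient; charts built from local sections, with transition maps smooth because the two sections differ by a group element acting as a diffeomorphism; and local agreement forcing uniqueness of the smooth structure) reproduce the standard argument given there. No gaps of substance; the only points to spell out in a full write-up are the finiteness of $\{g\in \Gamma : gK\cap K\neq \emptyset \}$ for compact $K$, which is what properness of a discrete action actually delivers, and the closedness of the orbit relation for Hausdorffness.
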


Another useful representation of the lens space $L_{p,q}$ is the following. Decompose the 3-sphere $\Se $ into two solid tori $T_{1}$ and $T_{2}$ as in \eqref{eq1}. Denote by $\mu _{i}$ (respectively $\lambda _{i}$) the meridian (respectively longitude) of the solid torus $T_{i}$. Remove $T_{2}$ from $\Se $ and reglue it back along the boundary of $T_{1}$ by a homeomorphism $\phi \colon \partial T_{2}\to \partial T_{1}$, for which $\phi _{*}(\mu _{2})=p\lambda _{1}-q\mu _{1}$. The resulting 3-manifold is precisely the lens space $L_{p,q}$. In other words, $L_{p,q}$ is obtained from the 3-sphere by a $(p,-q)$ surgery on the solid torus $T_{2}$, and the corresponding Kirby diagram of $L_{p,q}$ is an unknot with framing $-\frac{p}{q}$.  
 
\subsection{Links in $L_{p,q}$} 
\label{subs13}
Knots and links in lens spaces were studied in \cite{CMM, GA, MAN2}. We briefly recall some basic notions which will be used in the paper. 

A link $K$ inside a lens space $L_{p,q}$ may be represented in different ways. We may draw $K$ inside the Kirby diagram of $L_{p,q}$ mentioned above, to obtain a so-called mixed link diagram. Since the surgery link is simply an unknot $U$, we may use a projection in which $U$ intersects the plane of the diagram transversely in a single point, which we denote by a dot. In this way, we obtain a \textbf{punctured disk diagram} of $K$. By cutting the plane of the punctured disk diagram along a ray whose endpoint is the dot, we obtain a \textbf{band diagram} of $K$.  

\begin{figure}[h!]
\labellist
\normalsize \hair 2pt
\pinlabel $U$ at 0 40
\pinlabel $K$ at 300 300
\endlabellist
\begin{center}
\includegraphics[scale=0.3]{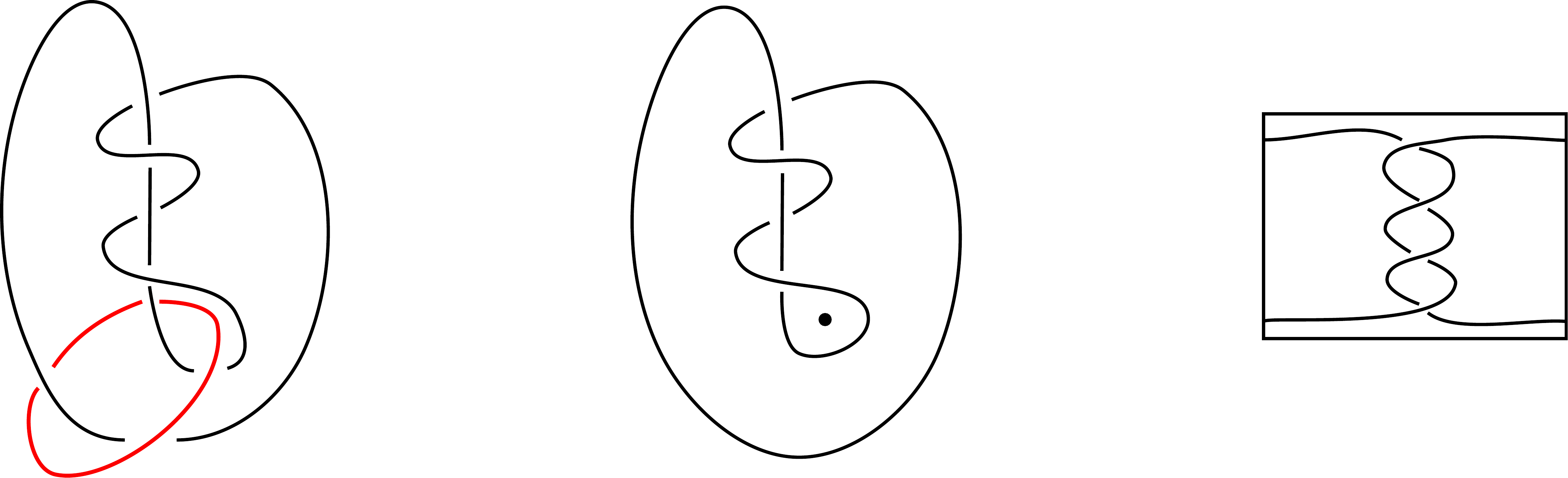}
\caption{The mixed link diagram (left), punctured disk diagram (middle) and the band diagram (right) of a knot $K$ in $L_{p,q}$}
\label{fig1}
\end{center}
\end{figure}

Starting from a band diagram of a link $K$ in $L_{p,q}$, it is possible to obtain a diagram for the lift of $K$ in the 3-sphere \cite{MAN2}. The Garside braid $\Delta _n$ on $n$ strands is given by $$\Delta _n = (\sigma _{n-1}\sigma _{n-2}\ldots \sigma _{1})(\sigma _{n-2}\ldots \sigma _{1})\ldots \sigma _{1}\;,$$ where $\sigma _{i}$ are the standard Artin's generators of the braid group $B_{n}$. 

\begin{proposition}\cite[Proposition 6.4]{MAN2} \label{prop6} Let $K$ be a link in the lens space $L_{p,q}$ and let $B_{K}$ be a band diagram of $K$ with $n$ boundary points. Then a diagram for the lift $\widetilde{K}$ in the 3-sphere $S^{3}$ can be found by juxtaposing $p$ copies of $B_{K}$ and closing them with the braid $\Delta _{n}^{2q}$. 
\end{proposition}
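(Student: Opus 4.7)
The plan is to lift the band diagram to $S^3$ by a fundamental-domain argument: read $B_K$ as a projection of $K$ inside one sheet of the covering $\pi \colon S^3 \to L_{p,q}$, assemble the preimage tangle from $p$ translated copies, and identify the braid needed to close it up with a power of a full twist on $n$ strands.

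First I would pin down the geometric meaning of $B_K$. In the surgery picture, $K \subset T_1 \subset L_{p,q}$, and the punctured disk diagram is a projection of $K$ onto a meridional disk of $T_1$, with the dot marking where this disk is pierced by the core of $T_2$. Cutting along a ray unrolls the punctured disk into a rectangular band whose two vertical edges are the two sides of the cut, and $B_K$ records $K$ as an $n$-strand tangle inside this band. Under the covering, $T_1 \subset L_{p,q}$ lifts to the $\ZZ_p$-invariant solid torus $T_1 \subset S^3$ from \eqref{eq1}, and the rectangular band lifts to an angular wedge of opening $2\pi/p$ cut from $T_1 \subset S^3$ by two meridional half-disks; this wedge is a fundamental domain for the restricted $\ZZ_p$-action.

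Next I would build $\pi^{-1}(K)$ out of juxtaposed copies of $B_K$. Applying $\zeta^0, \zeta^1, \ldots, \zeta^{p-1}$ to the fundamental wedge produces $p$ consecutive angular wedges whose union fills $T_1 \subset S^3$. Because $\widetilde{K} = \pi^{-1}(K)$ is continuous across the shared walls, the $n$ strand endpoints of each copy of $B_K$ match those of its neighbour without any further crossings. The juxtaposition of $p$ copies of $B_K$ is therefore a planar diagram for $\pi^{-1}(K) \cap T_1$ as an $n$-strand tangle, whose two free sides are the two sides of the last remaining meridional cut.

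Finally I would compute the closing braid. Closing this tangle in $S^3$ requires connecting the right-hand endpoints back to the left-hand endpoints by going once around the longitude of $T_1 \subset S^3$. Comparing with the deck group action, one full revolution in the longitudinal direction of $T_1 \subset S^3$ corresponds to $p$ applications of $\zeta$ in the $L_{p,q}$ picture; by \eqref{eq3}, each such application rotates the meridional disk of $T_1$ by the angle $2\pi q/p$, so the cumulative effect on the $n$ endpoints is a rigid rotation of the disk by $2\pi q$. Rigid rotation of a meridional disk by $2\pi$ acts on any $n$-strand tangle it contains as the generator $\Delta_n^{2}$ of the centre of $B_n$, so the required closing braid is $\Delta_n^{2q}$. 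The main obstacle is verifying that the cumulative monodromy is genuinely \emph{rigid} (so that the exponent $2q$ applies uniformly regardless of the placement of the $n$ punctures) and that the sign and the factor $q$ rather than $q^{-1}$ or $-q$ are consistent with the $(p,-q)$-surgery conventions for $L_{p,q}$ fixed after \eqref{eq1}.
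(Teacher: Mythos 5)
The paper offers no proof of this statement: it is quoted verbatim from \cite[Proposition 6.4]{MAN2}, so there is no in-paper argument to compare yours against. Judged on its own, your fundamental-domain argument is the standard and essentially correct route: the Heegaard solid torus of $L_{p,q}$ containing $K$ lifts to the invariant solid torus upstairs, the preimage of the cut-open diagram is $p$ wedges each carrying a copy of the tangle, and the deck transformation's meridional rotation by $2\pi q/p$ accumulates over $p$ wedges to a full-disk rotation by $2\pi q$, i.e.\ $q$ full twists $\Delta_n^{2q}$ at the closure. Two points deserve tightening. First, your steps two and three are really one normalization stated twice: the tangle in the $j$-th wedge is the $\zeta^j$-image of the tangle in the $0$-th wedge, hence is rotated by $2\pi qj/p$ in the meridian-disk coordinate relative to it; you may either draw the wedges as literally identical copies of $B_K$ and concentrate the entire $2\pi q$ rotation at the final gluing, or distribute it, but you cannot simultaneously claim that consecutive copies match ``without any further crossings'' \emph{and} that the closure picks up the rotation unless you say explicitly that the gradual untwisting has been pushed to the end. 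Second, the step you flag as ``the main obstacle'' is genuinely the crux and not a formality: the band diagram is defined via the surgery presentation (blackboard framing of the complement of $U$), while the juxtaposition is defined via the quotient presentation (the covering's product structure on $T_1$), and the exponent $2q$ --- as opposed to $-2q$ or $2q^{-1}\bmod p$ --- is exactly the discrepancy between the lift of the blackboard longitude and the Seifert-framed longitude of $T_1\subset S^3$. Until that identification is pinned down against the convention $\phi_*(\mu_2)=p\lambda_1-q\mu_1$, the argument determines the closing braid only up to these ambiguities. Minor slips: the projection surface of the punctured disk diagram is a spanning annulus of the solid torus complement of $U$, not a meridional disk of $T_1$; the fundamental wedge is bounded by two full meridian disks of $T_1$, with the $2\pi/p$ opening in the longitudinal direction; and for $n=2$ the center of $B_n$ is generated by $\Delta_2$ rather than $\Delta_2^2$, though this does not affect the computation.
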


\begin{example} \label{ex1} Consider the knot $K$ in the lens space $L_{3,1}$, which is given by the band diagram on the right of Figure \ref{fig1}. The band diagram has 2 boundary points, and $\Delta _{2}=\sigma _{1}$. By Proposition \ref{prop6}, the lift of $K$ in the 3-sphere is given by the diagram in Figure \ref{fig2}. 
\begin{figure}[h!]
\labellist
\normalsize \hair 2pt
\endlabellist
\begin{center}
\includegraphics[scale=0.25]{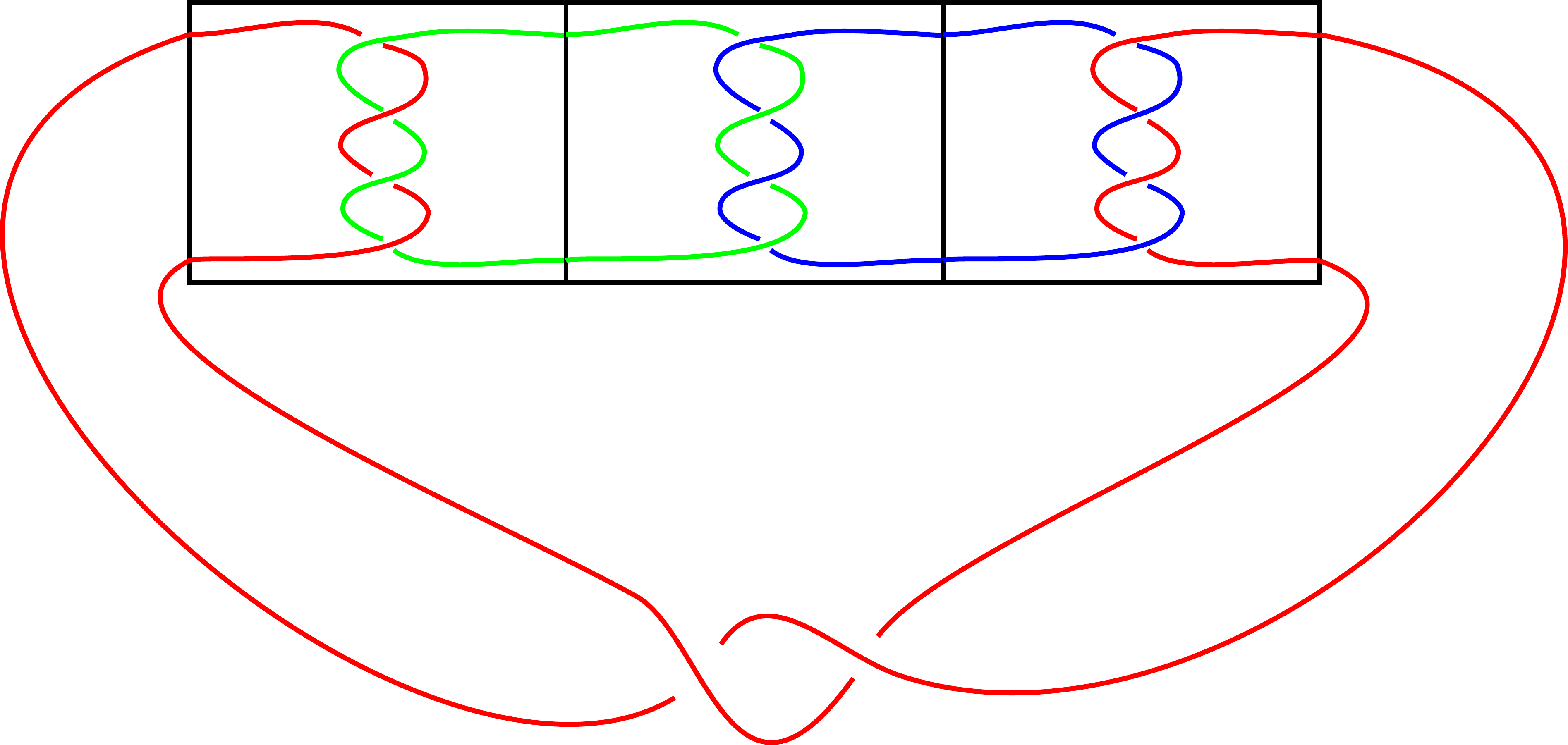}
\caption{The diagram of $\widetilde{K}$, the lift of $K$ in $S^{3}$ (see Example \ref{ex1})}
\label{fig2}
\end{center}
\end{figure}

\end{example}

\section{Algebraic links in lens spaces}
\label{sec2}
We are going to introduce complex polynomials whose singular points define links in lens spaces. Throughout this Section, we denote by $p,q$ two coprime integers, and by $\zeta \in \CC $ a primitive $p^{th}$ root of unity. \\

The radius function $\rho (x,y)=|x|^{2}+|y|^{2}$ induces a lamination of $\CC ^{2}\backslash \{0\}$ into concentric spheres. Denoting by $\tau \colon \CC ^{2}\backslash \{0\}\to S^{3}$ the normalization map, given by $\tau (\mathbf{z})=\frac{\mathbf{z}}{\rho (\mathbf{z})^{\frac{1}{2}}}$, the pair $(\rho ,\tau )$ defines an orientation preserving diffeomorphism $$(\rho ,\tau )\colon \CC ^{2}\backslash \{0\}\to (0,\infty )\times S^{3}\;.$$ Via this diffeomorphism, the 3-sphere $\Se $ is identified by $\{\epsilon \}\times S^{3}$. The action of the Lie group $G_{p,q}=\langle \zeta \rangle $, defined by \eqref{eq3}, may be imposed on $(0,\infty )\times S^{3}$ to yield the orbit space $(0,\infty )\times S^{3}/G_{p,q}\approx (0,\infty )\times L_{p,q}$. Consider complex polynomials whose zero-set is invariant under this action. 

\begin{definition} A polynomial $f(x,y)$ in two complex variables will be called \textbf{$(p,q)$-invariant} if there exists an integer $0\leq k<p$, such that $f(\zeta \cdot (x,y))=\zeta ^{k}f(x,y)$ for any $(x,y)\in \CC ^{2}$. 
\end{definition}

Denoting by $\pi \colon S^{3}\to L_{p,q}$ the quotient projection, we obtain the following diagram:
\begin{displaymath}
\label{eq4}
\xymatrix{\CC^{2}\backslash \{(0,0)\} \ar@{->}[r]^{\quad \quad (\rho ,\tau )\quad \quad } \ar@{->}[dr]_{f} & (0,\infty )\times S^{3} \ar@{->}[r]^{\mathrm{id}\times \pi } & (0,\infty )\times L_{p,q}\\
\quad & \CC & \quad }
\end{displaymath}
Our definition immediately yields:

\begin{proposition} \label{prop1} If $f$ is a $(p,q)$-invariant polynomial, then the algebraic link $\widetilde{K}=f^{-1}(0)\cap \Se $ is the lift of a smooth link in $L_{p,q}$. 
\end{proposition}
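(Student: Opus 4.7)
The plan is to produce $K$ as the image $\pi(\widetilde{K})$ and then verify that (i) $\pi^{-1}(K)=\widetilde{K}$, so $\widetilde{K}$ is genuinely the lift of $K$, and (ii) $K$ is a smooth $1$-submanifold of $L_{p,q}$.

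First I would check invariance of $\widetilde{K}$ under the $G_{p,q}$-action. The defining identity $f(\zeta\cdot(x,y))=\zeta^{k}f(x,y)$ forces $\zeta\cdot(x,y)\in f^{-1}(0)$ whenever $(x,y)\in f^{-1}(0)$, so the variety $V=f^{-1}(0)$ is $G_{p,q}$-invariant. The sphere $\Se$ is also $G_{p,q}$-invariant because the action $(x,y)\mapsto(\zeta x,\zeta^{q}y)$ is unitary and therefore preserves the norm $|x|^{2}+|y|^{2}$. Hence $\widetilde{K}=V\cap\Se$ is a union of $G_{p,q}$-orbits, and taking $K:=\pi(\widetilde{K})$ automatically gives $\pi^{-1}(K)=\widetilde{K}$.

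Next I would upgrade $K$ from a set to a smooth submanifold. The action of $G_{p,q}$ on $\Se$ is free: if $(\zeta^{j}x,\zeta^{jq}y)=(x,y)$ for some $(x,y)\in\Se$ and $0<j<p$, then at least one coordinate is nonzero, giving $\zeta^{j}=1$ or $\zeta^{jq}=1$, each of which contradicts $\gcd(p,q)=1$ and the primitivity of $\zeta$. The action is also properly discontinuous because $G_{p,q}$ is finite. Theorem \ref{th1} then makes $\pi\colon\Se\to L_{p,q}$ a smooth covering map, in particular a local diffeomorphism. Since $\widetilde{K}$ is already known to be a smooth compact $1$-submanifold of $\Se$ (the Milnor result recalled in Subsection \ref{subs11}, under the standing hypothesis that $f$ is irreducible or a product of distinct irreducibles), the local diffeomorphism charts of $\pi$ transport this smooth $1$-dimensional structure down to $K$.

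I do not anticipate a substantive obstacle; the proposition is essentially equivariance plus Theorem \ref{th1}. The one point requiring a little care is to confirm that $K$ is embedded rather than merely immersed: one must check that the restriction $\pi|_{\widetilde{K}}\colon\widetilde{K}\to K$ is a covering of smooth $1$-manifolds, i.e.\ that the free $G_{p,q}$-action does not collapse distinct branches of $\widetilde{K}$ at a single orbit. This is handled by choosing a $G_{p,q}$-slice neighborhood around each point of $\widetilde{K}$ on which $\pi$ is a diffeomorphism, so that sheets of $\widetilde{K}$ meeting the same orbit are diffeomorphically permuted and descend to a single smooth arc in $K$.
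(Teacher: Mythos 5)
Your argument is correct and essentially identical to the paper's proof: both establish $G_{p,q}$-invariance of $V\cap\Se$ from the defining identity, conclude $\pi^{-1}(\pi(\widetilde{K}))=\widetilde{K}$, and combine Milnor's smoothness of $\widetilde{K}$ with Theorem \ref{th1} to push the smooth structure down to $L_{p,q}$. Your extra checks (freeness of the action, embeddedness of the quotient) are details the paper leaves implicit, not a different approach.
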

\begin{proof} Denote by $V=f^{-1}(0)$ the algebraic curve defined by $f$. Since $f$ is $(p,q)$-invariant, the link $\widetilde{K}=V\cap \Se $ is invariant under the action of $G_{p,q}$. It follows that $\pi ^{-1}(\pi (\widetilde{K}))=\widetilde{K}$. Since $\widetilde{K}$ is a smooth manifold \cite[Corollary 2.9]{MIL}, also $\pi (\widetilde{K})$ is a smooth manifold by Theorem \ref{th1}. 
\end{proof}

\begin{definition} A link $K$ in the lens space $L_{p,q}$ will be called \textbf{algebraic} if there exists a $(p,q)$-invariant polynomial $f$, such that the lift of $K$ in $S^{3}$ is isotopic to the algebraic link $f^{-1}(0)\cap \Se $ for some $\epsilon >0$. 
\end{definition}

\begin{example}\label{ex2} Consider the polynomial $f(x,y)=x^{8}+y^{2}$. The algebraic link $f^{-1}(0)\cap S^{3}$ is the 2-component torus link $T(8,2)$. Let $\zeta \in \CC $ be a primitive third root of unity. Since $f(\zeta x,\zeta y)=\zeta ^{2}f(x,y)$ for any $(x,y)\in \CC ^{2}$, $f$ is $(3,1)$-invariant. It follows by Proposition \ref{prop1} that $T(8,2)$ is the lift of an algebraic link in $L_{3,1}$. Indeed: consider the 2-component link $K$ in $L_{3,1}$, given by the diagrams on Figure \ref{fig3}. Applying Proposition \ref{prop6}, the lift of $K$ in the 3-sphere is given by the diagram on Figure \ref{fig4}, which clearly represents the torus link $T(8,2)$.   

\begin{figure}[h!]
\labellist
\normalsize \hair 2pt
\endlabellist
\begin{center}
\includegraphics[scale=0.25]{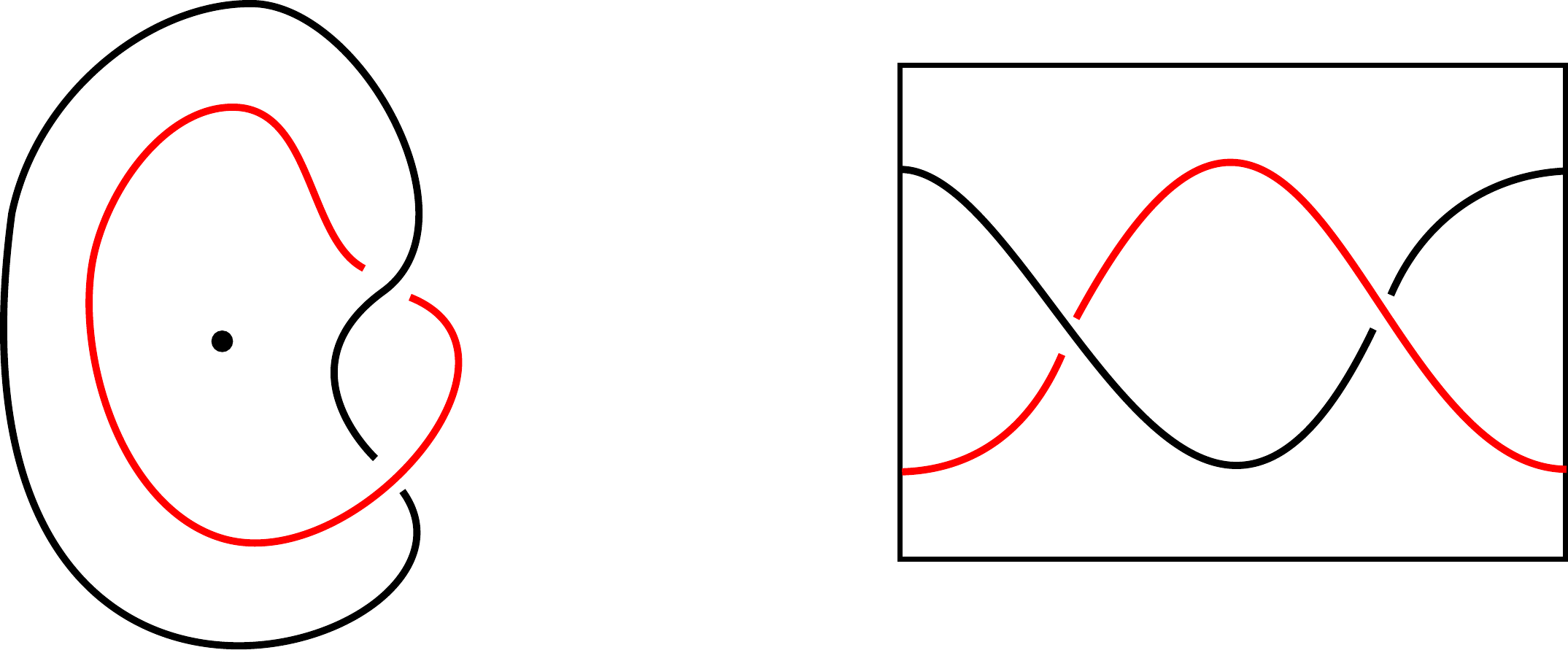}
\caption{The algebraic link in $L_{3,1}$ from Example \ref{ex2} (punctured disk diagram on the left and band diagram on the right)}
\label{fig3}
\end{center}
\end{figure}
\begin{figure}[h!]
\labellist
\normalsize \hair 2pt
\endlabellist
\begin{center}
\includegraphics[scale=0.25]{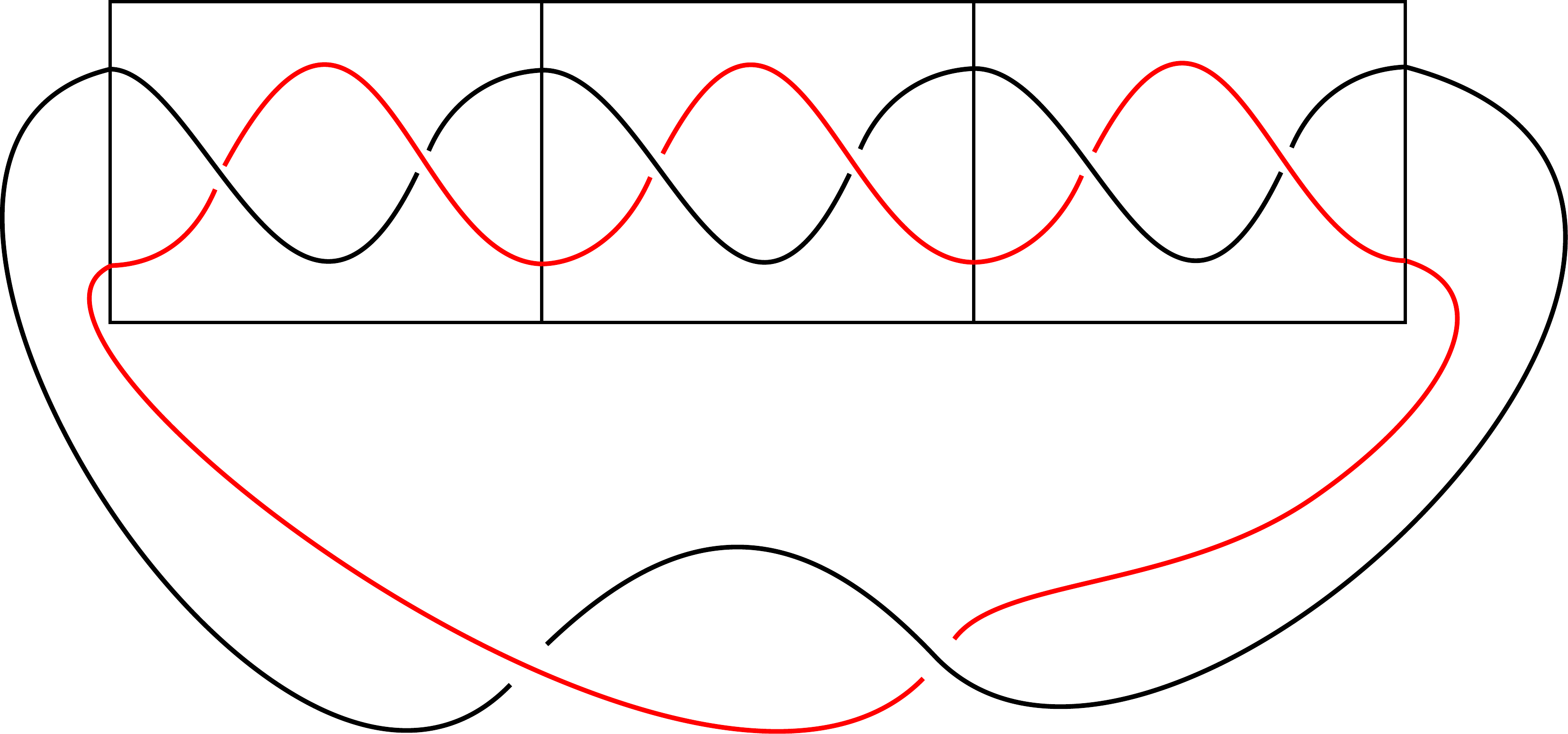}
\caption{The lift in $S^{3}$ of the link in $L_{3,1}$ (see Example \ref{ex2})}
\label{fig4}
\end{center}
\end{figure}
\end{example}

Recall that a link $K$ in a 3-manifold $M$ is called \textbf{fibered} if its complement $M-K$ is the total space of a fiber bundle over $S^{1}$. We would like to show that every algebraic link in a lens space is fibered. 

For the remainder of this Section, let $f(x,y)$ be a $(p,q)$-invariant polynomial that defines an algebraic link $K$ in $L_{p,q}$, whose lift is an algebraic link $\widetilde{K}=f^{-1}(0)\cap \Se $ in the 3-sphere. 

By a classical result of Milnor, every algebraic link in the 3-sphere is fibered. Define a map $\Phi \colon \Se \backslash \widetilde{K}\to S^{1}$ by 
\begin{xalignat}{1}\label{eq5}
& \Phi (x,y)=\frac{f(x,y)}{|f(x,y)|}\;.
\end{xalignat}

\begin{lemma}\cite[Corollary 4.5]{MIL}\label{lemma3} 
Given any non-constant polynomial $f(x,y)$ which vanishes at the origin, there exists an $\epsilon _{0}>0$ such that, for $\epsilon \leq \epsilon _{0}$, the map $\Phi \colon \Se \backslash \widetilde{K}\to S^{1}$ has no critical points at all. 
\end{lemma}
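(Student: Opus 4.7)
The plan is to adapt Milnor's classical argument \cite[Corollary 4.5]{MIL}, which proceeds by contradiction via the Curve Selection Lemma. The first task is to rewrite the condition ``$z$ is a critical point of $\Phi|_{\Se\setminus\widetilde{K}}$'' algebraically. Writing $\Phi = e^{i\,\mathrm{Im}\,\log f}$ and applying Lagrange multipliers on the sphere, one sees that $z$ is critical iff the real gradient $\nabla\,\mathrm{Im}\,\log f(z)$ is an $\RR$-multiple of the outward normal $z$. Combining this with the Wirtinger identity $\nabla\,\mathrm{Im}\,g = i\,\overline{\mathrm{grad}\,g}$ (valid for holomorphic $g$, where $\mathrm{grad}$ denotes the holomorphic gradient), the critical condition becomes
\begin{equation*}
\mathrm{grad}\,\log f(z) \;=\; i\lambda\,\bar z, \qquad \lambda \in \RR,
\end{equation*}
and the critical locus $\Sigma \subset \CC^{2}\setminus V$ of $\Phi$ is a real-semialgebraic set.

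Next I would suppose the conclusion fails, so that $\Sigma$ has the origin as a limit point, and apply the Curve Selection Lemma to obtain a real-analytic arc $p\colon[0,\delta)\to\CC^{2}$ with $p(0)=0$ and $p(t)\in\Sigma$ for $t\in(0,\delta)$. Along this arc I would compute the derivative of $\log f(p(t))$ with respect to $t$ in two ways. A direct expansion $f(p(t)) = c\,t^{m}+O(t^{m+1})$ with $c\neq 0$ and $m\geq 1$ yields $\tfrac{d}{dt}\log f(p(t)) = m/t + h(t)$ with $h$ analytic at $0$, so that the real part diverges like $m/t$ while the imaginary part remains bounded. The critical equation instead produces
\begin{equation*}
\tfrac{d}{dt}\log f(p(t)) \;=\; i\lambda(t)\sum_{j}\overline{p_{j}(t)}\,p_{j}'(t) \;=\; -\lambda(t)B(t)+i\lambda(t)A(t),
\end{equation*}
where $A+iB := \sum_{j}\overline{p_{j}}\,p_{j}'$.

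The main obstacle is the asymptotic comparison of $A$ and $B$ along the arc. Writing $p(t) = t^{a}p_{0}+O(t^{a+1})$ with $p_{0}\neq 0$, one has $\sum_{j}\overline{p_{j}(t)}\,p_{j}'(t) = a|p_{0}|^{2}\,t^{2a-1}+O(t^{2a})$; the leading coefficient $a|p_{0}|^{2}$ is a \emph{positive real number}, so $A$ has order exactly $2a-1$ while $B$ is $O(t^{2a})$, forcing $B/A \to 0$ as $t \to 0^{+}$. From the critical expression, the ratio of real to imaginary parts of $\tfrac{d}{dt}\log f(p(t))$ equals $-B/A$ and therefore tends to $0$; yet the direct expansion shows the same ratio has absolute value tending to $\infty$, a contradiction. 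The degenerate possibilities cause no trouble: $\lambda \equiv 0$ would imply $f\circ p$ is constant and hence identically zero (contradicting $p(t)\notin V$), and $A \equiv 0$ is impossible since $a|p_{0}|^{2} \neq 0$.
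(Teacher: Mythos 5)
The paper does not actually prove this statement: it is quoted directly from Milnor \cite[Corollary 4.5]{MIL}, so the only ``proof'' in the paper is the citation. Your argument is a correct reconstruction of Milnor's own proof of that corollary --- the algebraic reformulation of the critical-point condition (his Lemma 4.1), the Curve Selection Lemma applied to the semialgebraic critical locus, and the two-sided asymptotic comparison of $\frac{d}{dt}\log f(p(t))$ along the selected arc (his Lemma 4.3) --- so it takes essentially the same route as the source the paper relies on.
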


\begin{Theorem}\label{th2} \cite[Theorem 4.8]{MIL} Let $\widetilde{K}$ be an algebraic link in $S^{3}$, associated to a complex polynomial $f$. For $\epsilon \leq \epsilon _{0}$, the space $\Se \backslash \widetilde{K}$ is a smooth fiber bundle over $S^{1}$ with projection mapping $\Phi $. 
\end{Theorem}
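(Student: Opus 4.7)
The plan is to deduce Theorem~\ref{th2} by upgrading the submersion statement of Lemma~\ref{lemma3} to a locally trivial fibration. The standard tool here, Ehresmann's fibration theorem, requires the projection to be proper; but $\Se\backslash \widetilde{K}$ is not compact, so properness fails. I would instead build a vector field on $\Se\backslash \widetilde{K}$ whose flow explicitly trivializes $\Phi$.

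By Lemma~\ref{lemma3}, for $\epsilon\leq\epsilon_{0}$ the map $\Phi$ has no critical points, so it is a smooth submersion and each fiber $F_{\theta}=\Phi^{-1}(e^{i\theta})$ is a smooth surface. Writing $f=|f|e^{i\arg f}$ away from $\widetilde{K}$, I would consider the gradient $\nabla(\arg f)$ of the locally defined argument with respect to the induced Riemannian metric on $\Se$. Since $\Phi$ is a submersion, this gradient is nowhere zero along $\Se\backslash\widetilde{K}$, and I would rescale it to obtain a smooth vector field $v$ on $\Se\backslash\widetilde{K}$, tangent to $\Se$, satisfying $d(\arg f)(v)=1$. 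The integral curves of $v$ thus project under $\Phi$ to unit-speed rotation on $S^{1}$.

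The decisive step — and the main obstacle — is to show that the flow $\phi_{t}$ of $v$ is complete, i.e.\ defined for all $t\in\RR$. A priori an integral curve could spiral into $\widetilde{K}$ in finite time if $|\nabla(\arg f)|$ decayed too rapidly near the link, forcing $v$ to blow up. To rule this out I would employ the curve selection lemma together with a \L{}ojasiewicz-type inequality for the real-analytic functions $\log|f|$ and $\arg f$ on $\Se$, following the strategy of Milnor: any trajectory approaching $\widetilde{K}$ would yield a real-analytic path along which suitable estimates on $\nabla\log|f|$ and $\nabla(\arg f)$ are violated, giving a contradiction. With completeness in hand, fix $F_{0}=\Phi^{-1}(1)$ and define
\[
\Psi\colon \RR\times F_{0}\to \Se\backslash\widetilde{K},\qquad \Psi(t,z)=\phi_{t}(z).
\]
Then $\Phi\circ\Psi(t,z)=e^{it}$, and $\Psi$ descends to a diffeomorphism from the mapping torus of the monodromy $\phi_{2\pi}\colon F_{0}\to F_{0}$ onto $\Se\backslash\widetilde{K}$, exhibiting $\Phi$ as a smooth fiber bundle over $S^{1}$ with fiber $F_{0}$.
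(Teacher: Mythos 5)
The paper offers no proof of this statement: it is quoted directly from Milnor \cite[Theorem 4.8]{MIL}, so the only benchmark is Milnor's own argument, and your sketch is essentially a reconstruction of it. The architecture is right: Lemma~\ref{lemma3} makes $\Phi$ a submersion, one builds a vector field $v$ on $\Se\backslash\widetilde{K}$ with $d(\arg f)(v)=1$, completeness of its flow gives the trivializations, and the mapping-torus conclusion at the end is fine.

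The genuine gap is in the completeness step, which you correctly single out as the crux but do not close. Taking $v$ to be the tangential gradient of $\arg f$ rescaled so that $d(\arg f)(v)=1$, what must be controlled along an integral curve $p(t)$ is $d(\log|f|)(v)$: a uniform bound $|d(\log|f|)(v)|\leq c$ gives $|f(p(t))|\geq |f(p(0))|e^{-c|t|}$, so the trajectory stays in a compact subset of $\Se\backslash\widetilde{K}$ and the flow is complete. Your proposal to obtain this by applying the curve selection lemma to a hypothetical escaping trajectory does not work as stated: the curve selection lemma produces real-analytic arcs inside real semi-analytic sets, and an integral curve of $v$ is not such an object, so no contradiction is extracted this way. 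Milnor's actual route is to apply the curve selection lemma to the semi-analytic locus of points where a pointwise angle inequality for $\mathrm{grad}\log f$ fails, conclude that for $\epsilon$ sufficiently small this locus is empty near the origin (note: this requires shrinking $\epsilon$ beyond what Lemma~\ref{lemma3} alone provides), and then \emph{construct} $v$ by a partition of unity so that the bound $|d(\log|f|)(v)|<d(\arg f)(v)=1$ is built in from the start; completeness is then an elementary ODE estimate rather than an after-the-fact analysis of trajectories. In short, the tools you name (curve selection, \L{}ojasiewicz-type inequalities) are the right ones, but they must be deployed as pointwise gradient inequalities feeding the construction of $v$, not applied to the flow lines themselves.
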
 

Since $f$ is $(p,q)$-invariant, we have $f(\zeta x,\zeta ^{q}y)=\zeta ^{k}f(x,y)$ for some $0\leq k<p$. Even though the zero-set of the polynomial $f$ is invariant under the action of $G_{p,q}$, its nonzero values are not (unless $k=0$). Denote $\overline{p}=\frac{p}{\gcd (k,p)}$ and define a covering map $\mu \colon S^{1}\to S^{1}$ by $$\mu (z)=z^{\overline{p}}\;.$$ 

\begin{proposition} The map $\Phi \colon \Se \backslash \widetilde{K}\to S^{1}$ induces a well-defined map $\Psi \colon L_{p,q}\backslash K\to S^{1}$, such that the following diagram commutes:
\begin{displaymath}
\label{eq5}
\xymatrix{\Se \backslash \widetilde{K} \ar@{->}[r]^{\pi } \ar@{->}[d]_{\Phi } &  L_{p,q}\backslash K  \ar@{->}[d]^{\Psi }\\
S^{1} \ar@{->}[r]^{\mu } & S^{1}}
\end{displaymath}
\end{proposition}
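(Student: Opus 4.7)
The plan is to verify that $\mu \circ \Phi$ is $G_{p,q}$-invariant, so that it descends through the quotient map $\pi$ to a well-defined map $\Psi$ on the orbit space $L_{p,q}\backslash K = (\Se \backslash \widetilde{K})/G_{p,q}$.

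First I would use the $(p,q)$-invariance of $f$ to compute directly how $\Phi$ transforms under the generator of $G_{p,q}$. From $f(\zeta \cdot (x,y)) = \zeta^{k}f(x,y)$ and $|\zeta^{k}| = 1$, we get
\begin{xalignat*}{1}
& \Phi(\zeta \cdot (x,y)) = \frac{\zeta^{k} f(x,y)}{|\zeta^{k} f(x,y)|} = \zeta^{k}\,\Phi(x,y)\;.
\end{xalignat*}
Thus $\Phi$ is equivariant with respect to the $G_{p,q}$-action on $\Se \backslash \widetilde{K}$ and the multiplication-by-$\zeta^{k}$ action on $S^{1}$.

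Next I would compose with $\mu$ and show the resulting map is invariant. We get
\begin{xalignat*}{1}
& \mu\bigl(\Phi(\zeta \cdot (x,y))\bigr) = \bigl(\zeta^{k}\Phi(x,y)\bigr)^{\overline{p}} = \zeta^{k\overline{p}}\,\mu(\Phi(x,y))\;.
\end{xalignat*}
The key number-theoretic observation is that $\overline{p} = p/\gcd(k,p)$ was chosen precisely so that $k\overline{p} = p \cdot \bigl(k/\gcd(k,p)\bigr)$ is an integer multiple of $p$. Since $\zeta$ is a primitive $p$-th root of unity, this gives $\zeta^{k\overline{p}} = 1$, so $\mu \circ \Phi$ is constant on each $G_{p,q}$-orbit.

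Finally, since $\pi \colon \Se \backslash \widetilde{K} \to L_{p,q}\backslash K$ is a quotient map by a free, proper, smooth group action (Theorem \ref{th1}), the universal property of quotients produces a unique continuous map $\Psi \colon L_{p,q}\backslash K \to S^{1}$ with $\Psi \circ \pi = \mu \circ \Phi$, and commutativity of the diagram is built in. Smoothness of $\Psi$ then follows because $\pi$ is a smooth covering, hence a local diffeomorphism, so $\Psi$ is locally the composition of $\mu \circ \Phi$ with a smooth local inverse of $\pi$. The only substantive step is the number-theoretic check involving $\overline{p}$; once the definition of $\overline{p}$ is unpacked this is essentially immediate, so I do not expect any serious obstacle.
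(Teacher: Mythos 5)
Your proposal is correct and follows essentially the same route as the paper: both verify that $\mu\circ\Phi$ is constant on $G_{p,q}$-orbits using $f(\zeta\cdot(x,y))=\zeta^{k}f(x,y)$ and then descend through $\pi$. You simply make explicit the number-theoretic step $\zeta^{k\overline{p}}=1$ that the paper leaves implicit in the equality $\mu\bigl(\zeta^{k}f(x,y)/|f(x,y)|\bigr)=\mu\bigl(f(x,y)/|f(x,y)|\bigr)$.
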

\begin{proof} Denote $\pi (x,y)=[x,y]$. Following the diagram, the map $\Psi $ is given by $\Psi ([x,y])=(\mu \circ \Phi )(x,y)$. To see that this is well-defined, compute 
\begin{xalignat*}{1}
& (\mu \circ \Phi )(\zeta x,\zeta ^{q}y)=\mu \left (\frac{f(\zeta x,\zeta ^{q}y)}{|f(\zeta x,\zeta ^{q}y)|}\right )=\mu \left (\frac{\zeta ^{k}f(x,y)}{|f(x,y)|}\right )=\mu \left (\frac{f(x,y)}{|f(x,y)|}\right )=(\mu \circ \Phi )(x,y)\;.
\end{xalignat*}
\end{proof}

By \cite[Lemma 6.1]{MIL}, the closure of each fiber of the fibration $\Phi \colon \Se \backslash \widetilde{K}\to S^{1}$ is a Seifert surface of the link $\widetilde{K}$. To obtain a similar statement about the map $\Psi $, we make use of Lemma \ref{lemma3} together with the following result:

\begin{Theorem}\cite[Theorem 5.22]{LEE}\label{th3}
Let $M$ and $N$ be smooth manifolds, and let $\phi \colon M\to N$ be a smooth map with constant rank $k$. Each level set of $\phi $ is a closed embedded submanifold of codimension $k$ in $M$. 
\end{Theorem}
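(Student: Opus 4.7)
The plan is to derive Theorem \ref{th3} by invoking the Rank Theorem and reading off level sets as slices in adapted charts; the closedness then follows from continuity. Since this is a standard result in the smooth manifold literature, the argument reduces to pointwise coordinate bookkeeping once the Rank Theorem is available.

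First I would fix an arbitrary value $c \in N$ and a point $p \in \phi^{-1}(c)$ (there is nothing to prove if the level set is empty). Because $\phi$ has constant rank $k$ throughout $M$, the Rank Theorem produces smooth charts $(U,\varphi)$ centered at $p$ and $(V,\psi)$ centered at $c$ in which the coordinate representation of $\phi$ has the normal form
\[
\widehat{\phi}(x^1,\ldots,x^m) = (x^1,\ldots,x^k,0,\ldots,0).
\]
In these coordinates the set $\phi^{-1}(c) \cap U$ is exactly the slice $\{x^1 = \cdots = x^k = 0\}$, so $(U,\varphi)$ is a slice chart for $\phi^{-1}(c)$ of codimension $k$ at $p$.

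Since $p$ was arbitrary in $\phi^{-1}(c)$, the level set is cut out by slice charts at every one of its points. By the standard slice-chart characterization of embedded submanifolds (Proposition 5.16 in Lee, which I would cite rather than re-derive), $\phi^{-1}(c)$ inherits a unique smooth structure making it an embedded submanifold of codimension $k$ in $M$. For closedness, one observes that singletons in the Hausdorff manifold $N$ are closed, so $\phi^{-1}(c)$ is the preimage of a closed set under a continuous map, hence closed in $M$.

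The only nontrivial ingredient is the Rank Theorem itself, which rests on the Inverse Function Theorem and is where I would expect the real technical weight to sit; once that is granted, the remainder of the argument is essentially bookkeeping. The conceptual point I would emphasize is that the constant rank hypothesis — stronger than merely requiring $c$ to be a regular value — is precisely what lets \emph{every} fiber, not just the generic ones, be recognized as a smooth embedded submanifold of the expected codimension; this uniformity is what makes the theorem the natural tool for analyzing $\Psi$, whose constant rank will be checked via Lemma \ref{lemma3} applied to the lift $\Phi$.
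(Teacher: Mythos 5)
The paper does not prove this statement; it is quoted verbatim as a background result with a citation to Lee's \emph{Introduction to Smooth Manifolds} (Theorem 5.22), whose own proof is exactly the argument you give: the Rank Theorem puts $\phi$ in the normal form $(x^1,\ldots,x^m)\mapsto(x^1,\ldots,x^k,0,\ldots,0)$ near each point of the fiber, so the level set satisfies the local $k$-slice condition, and closedness follows since points of $N$ are closed. Your proposal is correct and coincides with the standard proof in the cited source.
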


\begin{proposition}\label{prop2}
Let $\epsilon \leq \epsilon _{0}$ and consider $\{\epsilon \}\times L_{p,q}$ as a quotient of $\Se $ under the action \eqref{eq3}. For any $t\in [0,2\pi )$, the inverse image $F_{t}=\Psi ^{-1}(e^{it})\subset (\{\epsilon \}\times L_{p,q})\backslash K$ is a smooth surface.
\end{proposition}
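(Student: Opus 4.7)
The plan is to apply the constant-rank level set theorem (Theorem \ref{th3}) to $\Psi$. Concretely, I will show that $\Psi \colon L_{p,q}\backslash K\to S^{1}$ is a submersion (i.e.\ has constant rank $1$), then conclude that each $F_{t}$ is a closed embedded codimension-$1$ submanifold of $L_{p,q}\backslash K$, hence a smooth surface.

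First, I would assemble the factorization $\Psi\circ \pi=\mu\circ \Phi$ and examine the ranks of the three maps on the right-hand side. By Lemma \ref{lemma3}, for $\epsilon\leq \epsilon_{0}$ the map $\Phi$ has no critical points on $\Se\backslash \widetilde{K}$, so $(d\Phi)_{(x,y)}$ has rank $1$ at every point. The covering map $\mu(z)=z^{\overline{p}}$ is a local diffeomorphism of $S^{1}$, so $(d\mu)_{\Phi(x,y)}$ is an isomorphism. By Theorem \ref{th1}, the quotient map $\pi\colon \Se\to L_{p,q}$ is a smooth covering, hence also a local diffeomorphism, so $(d\pi)_{(x,y)}$ is an isomorphism of $3$-dimensional tangent spaces.

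Next, at each point $[x,y]\in L_{p,q}\backslash K$ pick a preimage $(x,y)\in \Se\backslash \widetilde{K}$ and differentiate the commutative square to get
\begin{equation*}
(d\Psi)_{[x,y]}\circ (d\pi)_{(x,y)}=(d\mu)_{\Phi(x,y)}\circ (d\Phi)_{(x,y)}.
\end{equation*}
Since $(d\pi)_{(x,y)}$ is invertible, this yields
\begin{equation*}
(d\Psi)_{[x,y]}=(d\mu)_{\Phi(x,y)}\circ (d\Phi)_{(x,y)}\circ (d\pi)_{(x,y)}^{-1},
\end{equation*}
which is a composition of a rank-$1$ linear map with two isomorphisms. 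Hence $(d\Psi)_{[x,y]}$ has rank $1$. As the point $[x,y]$ was arbitrary and $\pi$ is surjective, $\Psi$ has constant rank $1$ on all of $L_{p,q}\backslash K$.

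Finally, by Theorem \ref{th3} applied to the constant-rank map $\Psi\colon L_{p,q}\backslash K\to S^{1}$, each level set $F_{t}=\Psi^{-1}(e^{it})$ is a closed embedded submanifold of codimension $1$ in the $3$-manifold $L_{p,q}\backslash K$, so it is a smooth $2$-dimensional manifold, i.e.\ a smooth surface. The only step requiring any care is the verification that $\Psi$ is well-defined and smooth (already handled by the previous proposition together with the fact that $\pi$ is a local diffeomorphism, so smoothness descends), and the rank computation above; there is no real obstacle beyond chasing ranks through the commutative diagram.
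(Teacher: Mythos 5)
Your proposal is correct and follows essentially the same route as the paper: both arguments use Lemma \ref{lemma3} to see that $\Phi$ has no critical points, use the fact that $\pi$ and $\mu$ are smooth covering maps (hence local diffeomorphisms) to conclude that $\Psi$ is a smooth submersion, and then apply Theorem \ref{th3} to identify each level set as a smooth codimension-one submanifold. You merely make explicit the chain-rule computation that the paper leaves implicit.
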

\begin{proof} By Lemma \ref{lemma3}, the map $\Phi $ has no critical points on $\Se \backslash \widetilde{K}$. By Theorem \ref{th1}, the quotient maps $\pi \colon \Se \to L_{p,q}$ and $\mu \colon S^{1}\to S^{1}$ are both smooth covering maps. Thus the induced map $\Psi \colon (\{\epsilon \}\times L_{p,q})\backslash K\to S^{1}$ is a smooth map without critical points and by Theorem \ref{th3}, each level set $F_{t}=\Psi ^{-1}(e^{it})$ is a smooth surface. 
\end{proof}

\begin{lemma} \label{lemma5} The boundary of the closure of $F_{t}$ in $L_{p,q}$ is precisely the link $K$.  
\end{lemma}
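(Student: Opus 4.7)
The plan is to lift $F_{t}$ to $\Se$ along the covering $\pi$, where it decomposes as a finite disjoint union of Milnor fibers, apply Milnor's classical boundary lemma \cite[Lemma 6.1]{MIL} to each of them, and then push the result back down to $L_{p,q}$.

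First, the commutative diagram of the preceding proposition shows that $\pi^{-1}(F_{t}) = \Phi^{-1}(\mu^{-1}(e^{it}))$. Since $\mu(z)=z^{\overline{p}}$, the preimage $\mu^{-1}(e^{it})$ consists of the $\overline{p}$ points $e^{i(t+2\pi j)/\overline{p}}$, $j=0,1,\ldots ,\overline{p}-1$, so
$$\pi^{-1}(F_{t}) \;=\; \bigsqcup_{j=0}^{\overline{p}-1}\Phi^{-1}\!\bigl(e^{i(t+2\pi j)/\overline{p}}\bigr)$$
is a disjoint union of $\overline{p}$ Milnor fibers of $\Phi$. By \cite[Lemma 6.1]{MIL}, the closure in $\Se$ of each Milnor fiber $\Phi^{-1}(e^{is})$ is obtained by adjoining exactly $\widetilde{K}$ as its boundary. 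Since taking the closure commutes with finite unions, it follows that
$$\overline{\pi^{-1}(F_{t})} \;=\; \pi^{-1}(F_{t})\,\cup\,\widetilde{K}.$$

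Second, I would use that $\pi$ is a closed continuous surjection (being the orbit map of a free action of the finite group $G_{p,q}$) to conclude that $\pi(\overline{A})=\overline{\pi(A)}$ for every subset $A\subseteq \Se$. Applying this with $A=\pi^{-1}(F_{t})$, and using $\pi(\pi^{-1}(F_{t}))=F_{t}$ together with $\pi(\widetilde{K})=K$, one obtains
$$\overline{F_{t}} \;=\; \pi\bigl(\overline{\pi^{-1}(F_{t})}\bigr) \;=\; F_{t}\,\cup\,K.$$
Because $F_{t}\subset L_{p,q}\backslash K$, the two pieces are disjoint, and the boundary of $\overline{F_{t}}$ in $L_{p,q}$ is precisely $K$, as claimed.

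The one delicate point is the interchange of closure with $\pi$, which relies on $\pi$ being a closed map. If one wishes to avoid invoking closedness, the same conclusion can be reached by a direct sequential argument: any convergent sequence $(y_{n})\subset F_{t}$ with limit $x\in \overline{F_{t}}$ can, by compactness of $\Se$, be lifted to a convergent sequence $(\widetilde{y}_{n})$ in $\pi^{-1}(F_{t})$, a sub-subsequence of which stays in a single Milnor fiber; Milnor's lemma then forces its limit $\widetilde{x}$ to lie in $\widetilde{K}$ whenever $x\notin F_{t}$, so $x=\pi(\widetilde{x})\in K$. Conversely, every point of $K$ is easily seen to be a limit of points of $F_{t}$ by choosing a lift and approaching it within a single Milnor fiber.
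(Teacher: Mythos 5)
Your proof is correct and follows essentially the same strategy as the paper: lift $F_t$ to $\Se$, identify the preimage as a union of Milnor fibers, apply Milnor's Lemma 6.1 there, and descend via $\pi$. The only difference is in the descent step, where the paper carries out a two-inclusion neighbourhood-chasing argument by hand, while you invoke the (correct) fact that the orbit map of a finite group action is closed, hence satisfies $\pi(\overline{A})=\overline{\pi(A)}$ --- a tidier way to reach the same conclusion.
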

\begin{proof} Let $\widetilde{F}_{t}=\pi ^{-1}(F_{t})$ be the preimage of $F_{t}$ in $\Se $. Denote the boundary of the closure of $F_{t}$ in $L_{p,q}$ (respectively closure of $\widetilde{F}_{t}$ in $\Se $) by $\partial F_{t}$ (respectively $\partial \widetilde{F}_{t}$). By \cite[Lemma 6.1]{MIL} we have $\partial \widetilde{F}_{t}=\widetilde{K}$. 

To show that $K\subset \partial F_{t}$, choose an $x\in K$ and let $U\subset L_{p,q}$ be any neighbourhood of $x$. Then $\pi ^{-1}(U)\subset \Se $ is a neighbourhood of the set $\pi ^{-1}(x)\subset \widetilde{K}=\partial \widetilde{F}_{t}$, thus there exists a point $z\in \pi ^{-1}(U)\cap \widetilde{F}_{t}$. It follows that $\pi (z)\in U\cap F_{t}$ and $x\in U\backslash F_{t}$, therefore $x\in \partial F_{t}$.  

To check that $\partial F_{t}\subset K$, let $y\in \partial F_{t}$. Since $F_{t}$ is an open set, $y\notin F_{t}$, therefore $\pi ^{-1}(y)\cap \widetilde{F}_{t}=\emptyset $. Choose an $x\in \pi ^{-1}(y)$ and let $U\subset \Se $ be a neighbourhood of $x$ in $\Se $ such that $\pi |_{U}\colon U\to \pi (U)$ is a diffeomorphism. Then $\pi (U)$ is a neighbourhood of $y$, therefore $\pi (U)\cap F_{t}\neq \emptyset $. Choose an element $z\in \pi (U)\cap F_{t}$ and let $w\in U$ be the unique element for which $\pi (w)=z$. It follows that $w\in U\cap \widetilde{F}_{t}$, therefore $U\cap \widetilde{F}_{t}\neq \emptyset $ and $U\backslash \widetilde{F}_{t}\neq \emptyset $. We have shown that $x\in \partial \widetilde{F}_{t}=\widetilde{K}$, which implies $y=\pi (x)\in K$. 
\end{proof}

\begin{corollary} \label{cor1} Every algebraic link in $L_{p,q}$ is nullhomologous. 
\end{corollary}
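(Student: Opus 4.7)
The proof proposal is essentially a one-line consequence of the two results immediately preceding the corollary, so my plan is to assemble those pieces carefully rather than to do anything new.

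First I would fix $\epsilon \leq \epsilon_{0}$ and pick any regular value $e^{it}\in S^{1}$ of the map $\Psi\colon L_{p,q}\setminus K\to S^{1}$; by Proposition \ref{prop2} every value is regular and the level set $F_{t}=\Psi^{-1}(e^{it})$ is a smooth properly embedded surface in $L_{p,q}\setminus K$. Then Lemma \ref{lemma5} identifies the boundary of the closure $\overline{F_{t}}\subset L_{p,q}$ with the link $K$. So the corollary reduces to the elementary topological fact that a link which bounds a compact embedded surface in an oriented $3$-manifold is nullhomologous: the fundamental class of $\overline{F_{t}}$, viewed as a singular $2$-chain in $L_{p,q}$, has boundary $[K]$ in $H_{1}(L_{p,q};\mathbb{Z})$.

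The only technicality I would address explicitly is orientability of $\overline{F_{t}}$, which is needed to have a genuine integral $2$-chain (as opposed to just a $\mathbb{Z}/2$ one). This is automatic: $F_{t}$ is the regular level set in the orientable manifold $L_{p,q}\setminus K$ of a smooth submersion into $S^{1}$, so its normal bundle is trivial and $F_{t}$ inherits a canonical orientation from those of $L_{p,q}\setminus K$ and $S^{1}$. Compactness of $\overline{F_{t}}$ follows from compactness of $L_{p,q}$.

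I do not foresee any real obstacle; the work was already done in Proposition \ref{prop2} and Lemma \ref{lemma5}. If anything, the subtlety is purely expository: one should emphasize that $\overline{F_{t}}$ is a compact, oriented, embedded Seifert surface for $K$ in $L_{p,q}$, from which the vanishing of $[K]\in H_{1}(L_{p,q};\mathbb{Z})$ is immediate.
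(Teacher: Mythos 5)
Your proposal is correct and follows exactly the route the paper intends: the corollary is stated without a separate proof precisely because it is the immediate consequence of Proposition \ref{prop2} and Lemma \ref{lemma5} (the link bounds the compact surface $\overline{F_{t}}$, hence is nullhomologous). Your added remarks on orientability and compactness of $\overline{F_{t}}$ are sensible details that the paper leaves implicit.
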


For an algebraic link $K$ in $L_{p,q}$ with $r$ components $K_{1},\ldots ,K_{r}$, denote by $\delta _{i}\in H_{1}(L_{p,q})\cong \ZZ _{p}$ the homology class of the component $K_{i}$. By Corollary \ref{cor1}, we have $$\sum _{i=1}^{r}\delta _{i}=0\;.$$ The following result relates the number of components of a link in $L_{p,q}$ and of its lift in $S^{3}$. 

\begin{lemma}\cite[Proposition 2]{MAN}\label{lemma6}
Let $K$ be an $r$-component link in $L_{p,q}$, whose lift is an $\widetilde{r}$-component link in $S^{3}$. Denote by $\delta _{i}\in H_{1}(L_{p,q})\cong \ZZ _p$ the homology class of the $i$-th component of $K$ for $i=1,\ldots ,r$. Then we have $$\sum _{i=1}^{r}\gcd(\delta _{i},p)=\widetilde{r}\;.$$   
\end{lemma}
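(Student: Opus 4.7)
The plan is to analyze, for each component $K_i$ separately, how many components of the preimage $\pi^{-1}(K_i)\subset \widetilde{K}$ it produces, and then sum. Since the components of $\widetilde{K}$ are partitioned by which $K_i$ they project to, once we show that $\pi^{-1}(K_i)$ has exactly $\gcd(\delta_i,p)$ components, the lemma follows by adding over $i=1,\ldots,r$.

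The main input is standard covering space theory applied to the $p$-fold regular cover $\pi\colon S^3\to L_{p,q}$ with deck group $G_{p,q}\cong \ZZ_p$. First I would note that $\pi_1(L_{p,q})\cong \ZZ_p$ is abelian, so the Hurewicz map identifies $\pi_1(L_{p,q})$ with $H_1(L_{p,q})$ and $\delta_i$ can be viewed as an element of either. Next, restricting $\pi$ to $\pi^{-1}(K_i)$ gives a covering of degree $p$ over the circle $K_i$. The deck group $\ZZ_p$ acts freely on $\pi^{-1}(K_i)$ with quotient $K_i$, and since $K_i$ is connected it permutes the components transitively; consequently, if $c_i$ denotes the number of components of $\pi^{-1}(K_i)$, then $c_i\mid p$ and each component is a cyclic cover of $K_i$ of degree $p/c_i$.

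The key step is to identify $p/c_i$ with the order of $\delta_i$ in $\ZZ_p$. A loop traversing $K_i$ a total of $n$ times lifts to a closed loop in $S^3$ (equivalently, returns to its starting point) precisely when its image $n\delta_i$ in $\pi_1(L_{p,q})\cong \ZZ_p$ is trivial. The smallest such $n$ is the order of $\delta_i$, namely $p/\gcd(\delta_i,p)$. Hence each component of $\pi^{-1}(K_i)$ covers $K_i$ with degree $p/\gcd(\delta_i,p)$, giving
\begin{equation*}
c_i \;=\; \frac{p}{p/\gcd(\delta_i,p)} \;=\; \gcd(\delta_i,p).
\end{equation*}

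Summing over the components and using $\widetilde{K}=\pi^{-1}(K)=\bigsqcup_{i=1}^{r}\pi^{-1}(K_i)$ then yields $\widetilde{r}=\sum_{i=1}^{r}\gcd(\delta_i,p)$. The only subtle point is the identification of the lifting criterion with the order of $\delta_i$, which relies on the fact that $L_{p,q}$ has abelian fundamental group so that the homology class $\delta_i$ literally controls the monodromy of the cover; everything else is bookkeeping about the free $\ZZ_p$-action on the preimage.
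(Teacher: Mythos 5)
Your argument is correct: decomposing $\pi^{-1}(K)$ component by component, using transitivity of the deck group $\ZZ_p$ on the components of $\pi^{-1}(K_i)$, and identifying the degree of each component over $K_i$ with the order $p/\gcd(\delta_i,p)$ of $\delta_i$ in $\pi_1(L_{p,q})\cong H_1(L_{p,q})\cong\ZZ_p$ is exactly the standard covering-space proof of this fact. The paper itself offers no proof, quoting the statement from Manfredi's work, so there is nothing to compare against; your self-contained argument is sound and is essentially the argument given in that reference.
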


\begin{corollary}\label{cor3} 
\begin{enumerate}
\item[(a)] If $K$ is an algebraic knot in $L_{p,q}$, then $\widetilde{K}$ is a $p$-component link in $S^{3}$. 
\item[(b)] If $K$ is a 2-component algebraic link in $L_{p,q}$, then $\widetilde{K}$ has $2\gcd (\delta _{1},p)$ components. 
\end{enumerate}
\end{corollary}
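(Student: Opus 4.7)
The plan is to deduce both parts directly from the combination of Corollary \ref{cor1} (every algebraic link in $L_{p,q}$ is nullhomologous) and Lemma \ref{lemma6} (the component-counting formula $\widetilde{r}=\sum_i \gcd(\delta_i,p)$), without introducing any new geometry. The only input beyond these two results is the elementary observation that $\gcd(\cdot,p)$ depends only on the residue class mod $p$, and that $\gcd(0,p)=p$.

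For part (a), I would set $r=1$, so that the homological constraint from Corollary \ref{cor1} reads $\delta_1=0$ in $H_1(L_{p,q})\cong \ZZ_p$. Then $\gcd(\delta_1,p)=\gcd(0,p)=p$, and Lemma \ref{lemma6} immediately yields $\widetilde{r}=p$.

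For part (b), I would set $r=2$, so that Corollary \ref{cor1} gives $\delta_1+\delta_2=0$ in $\ZZ_p$, i.e. $\delta_2\equiv -\delta_1\pmod{p}$. Since $\gcd(-\delta_1,p)=\gcd(\delta_1,p)$, Lemma \ref{lemma6} gives
\[
\widetilde{r}=\gcd(\delta_1,p)+\gcd(\delta_2,p)=2\gcd(\delta_1,p),
\]
as claimed.

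There is no real obstacle here; both statements are immediate bookkeeping consequences of the two previously established results. The only point to be careful about is to interpret $\gcd(0,p)$ correctly as $p$ in the first case, and to note that the sign of $\delta_1$ is irrelevant to its $\gcd$ with $p$ in the second case.
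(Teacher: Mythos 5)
Your proposal is correct and follows exactly the same route as the paper: part (a) uses Corollary \ref{cor1} to get $\delta_1=0$ and then Lemma \ref{lemma6} to conclude $\widetilde{r}=\gcd(0,p)=p$, and part (b) uses $\delta_2=-\delta_1$ together with $\gcd(-\delta_1,p)=\gcd(\delta_1,p)$ in the same formula. No differences worth noting.
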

\begin{proof}
\begin{enumerate}
\item[(a)] Since $K$ has one component, Corollary \ref{cor1} implies that $\delta _{1}=0$ and by Lemma \ref{lemma6}, its lift has $p$ components. 
\item[(b)] By Corollary \ref{cor1} we have $\delta _{2}=-\delta _{1}$ and thus $\gcd (\delta _{1},p)=\gcd (\delta _{2},p)$. Lemma \ref{lemma6} implies the result. 
\end{enumerate}
\end{proof}

\begin{example}\label{ex6} The zero set of the complex polynomial $f(x,y)=x^{8}+y^{2}$ intersects the 3-sphere in the 2-component torus link $T(8,2)$. In Example \ref{ex2} we have shown that $T(8,2)$ is the lift of a 2-component algebraic link in $L_{3,1}$, whose components represent homology classes $\pm 1\in H_{1}(L_{3,1})$.   

The polynomial $f(x,y)$ is also $(2,1)$-invariant, thus $T(8,2)$ is the lift of an algebraic link in $L_{2,1}$. Indeed: consider the knot $3_7$ in $L_{2,1}$, that is given by diagrams in Figure \ref{fig4}. In the lens space $L_{2,1}$, the knot $3_7$ is equivalent to the knot $2_1$ (see Figure \ref{fig6}) by \cite[Appendix B]{GA}. Applying Proposition \ref{prop6}, we can see that the lift of $3_7$ in the 3-sphere is equivalent to the torus link $T(8,2)$.  We have thus shown that $T(8,2)$ is at the same time the lift of an algebraic knot in $L_{2,1}$, and the lift of a 2-component algebraic link in $L_{3,1}$. 
\begin{figure}[h!]
\labellist
\normalsize \hair 2pt
\endlabellist
\begin{center}
\includegraphics[scale=0.25]{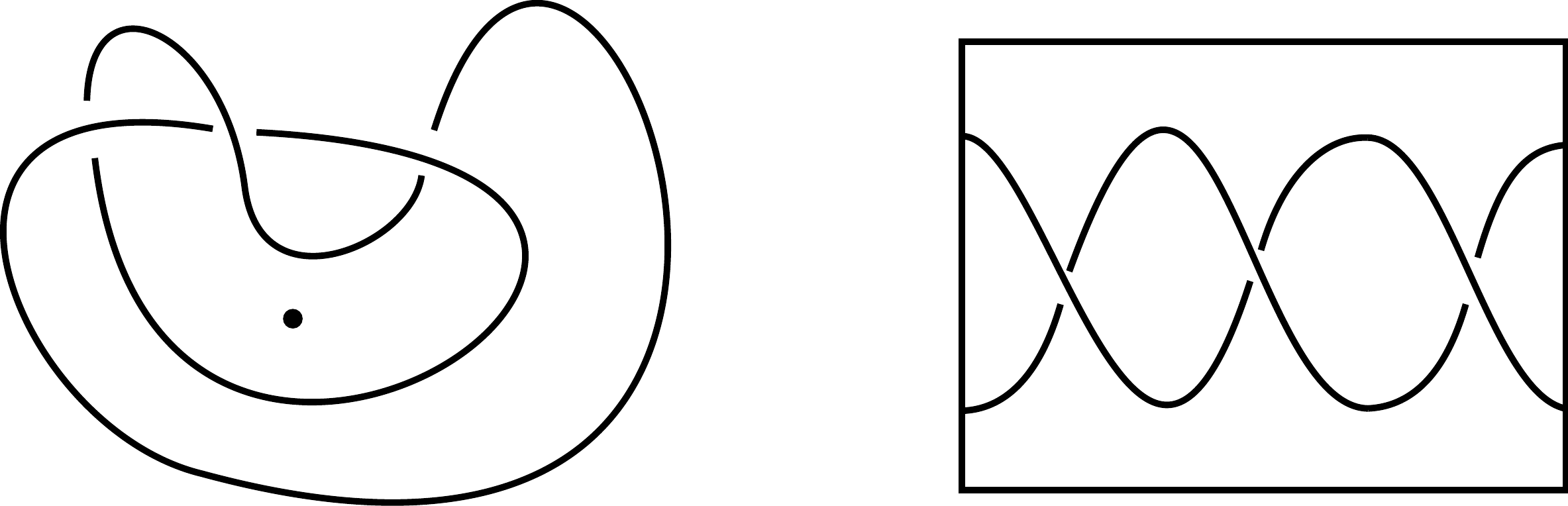}
\caption{The algebraic knot $3_7$ in $L_{2,1}$ from Example \ref{ex6} (punctured disk diagram on the left and band diagram on the right)}
\label{fig4}
\end{center}
\end{figure}
\end{example}

Recall the commutative diagram of smooth maps 
\begin{displaymath}
\xymatrix{\Se \backslash \widetilde{K} \ar@{->}[r]^{\pi } \ar@{->}[d]_{\Phi } &  L_{p,q}\backslash K  \ar@{->}[d]^{\Psi }\\
S^{1} \ar@{->}[r]^{\mu } & S^{1}}
\end{displaymath}
Firstly, we combine the maps $\mu $ and $\Phi $ to obtain:

\begin{lemma}\label{lemma4} Let $\epsilon _{0}$ be as in Lemma \ref{lemma3}, and let $\epsilon \leq \epsilon _{0}$. The map $\mu \circ \Phi \colon \Se \backslash \widetilde{K}\to S^{1}$ is the projection of a smooth fiber bundle, whose fiber consists of $\overline{p}$ disjoint fibers of the fiber bundle $\Phi \colon \Se \backslash \widetilde{K}\to S^{1}$.  
\end{lemma}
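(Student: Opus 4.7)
The plan is to prove Lemma \ref{lemma4} by a direct local-triviality argument that factors the bundle structure of $\mu\circ\Phi$ through the two given structures: the smooth covering $\mu\colon S^{1}\to S^{1}$ and the smooth fiber bundle $\Phi\colon \Se\backslash\widetilde{K}\to S^{1}$ guaranteed by Theorem \ref{th2}.

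First, I would identify the fibers. For any $w\in S^{1}$, since $\mu(z)=z^{\overline{p}}$ is a smooth $\overline{p}$-fold covering, $\mu^{-1}(w)=\{w_{1},\ldots,w_{\overline{p}}\}$ consists of $\overline{p}$ distinct points. Hence
\[
(\mu\circ\Phi)^{-1}(w)=\Phi^{-1}(w_{1})\sqcup\cdots\sqcup\Phi^{-1}(w_{\overline{p}}),
\]
which is the claimed disjoint union of $\overline{p}$ fibers of $\Phi$. In particular, since each $\Phi^{-1}(w_{j})$ is diffeomorphic to the Milnor fiber $F$ of $\Phi$, every fiber of $\mu\circ\Phi$ is diffeomorphic to $\{1,\ldots,\overline{p}\}\times F$.

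Next I would establish local triviality. Fix $w_{0}\in S^{1}$ and choose an evenly covered neighborhood $U\subset S^{1}$ of $w_{0}$ for $\mu$, so that $\mu^{-1}(U)=V_{1}\sqcup\cdots\sqcup V_{\overline{p}}$ with each restriction $\mu|_{V_{j}}\colon V_{j}\to U$ a diffeomorphism. Shrinking $U$ if needed, I may further assume that $\Phi$ is trivial over each $V_{j}$, giving diffeomorphisms $\varphi_{j}\colon \Phi^{-1}(V_{j})\to V_{j}\times F$ commuting with $\Phi$. Then
\[
(\mu\circ\Phi)^{-1}(U)=\bigsqcup_{j=1}^{\overline{p}}\Phi^{-1}(V_{j})
\;\xrightarrow{\;\sqcup\varphi_{j}\;}\;
\bigsqcup_{j=1}^{\overline{p}}V_{j}\times F
\;\xrightarrow{\;\sqcup(\mu|_{V_{j}}\times\mathrm{id})\;}\;
U\times\bigl(\{1,\ldots,\overline{p}\}\times F\bigr),
\]
and the composite is a diffeomorphism whose projection to $U$ equals $\mu\circ\Phi$. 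This exhibits $\mu\circ\Phi$ as a smooth fiber bundle with fiber $\{1,\ldots,\overline{p}\}\times F$.

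There is no real obstacle here; the content is just that the composition of a smooth fiber bundle with a smooth covering map downstairs is again a smooth fiber bundle, and the only mildly delicate point is to shrink the evenly covered neighborhood $U$ so that it also lies inside a trivializing open set for $\Phi$ over each sheet $V_{j}$ — this is possible because the sheets are pairwise disjoint and finitely many, so the intersection of the preimages of trivializing neighborhoods remains open and nonempty.
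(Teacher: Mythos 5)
Your proposal is correct and follows essentially the same route as the paper's proof: identify $(\mu\circ\Phi)^{-1}(U)$ with the disjoint union $\bigsqcup_{j}\Phi^{-1}(U_{j})$ over the sheets of the covering $\mu$, and assemble the local trivializations of $\Phi$ over the sheets into one trivialization over $U$ via the diffeomorphisms $\mu|_{U_{j}}$. Your explicit remark about shrinking $U$ so that each sheet lies in a trivializing neighborhood for $\Phi$ is a small point the paper leaves implicit, but the argument is the same.
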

\begin{proof} Choose a point $e^{it}\in S^{1}$, then $\mu ^{-1}(e^{it})=\{e^{i(\frac{t+2\pi j}{\overline{p}})}|\, j=1,\ldots ,\overline{p}\}$. Since $\mu $ is a covering map, there exists a neighbourhood $U\subset S^{1}$ of $e^{it}$, such that $\mu ^{-1}(U)=U_{1}\sqcup \ldots \sqcup U_{\overline{p}}$, and $\mu |_{U_{j}}$ is a diffeomorphism for $j=1,\ldots ,\overline{p}$. We have $(\mu \circ \Phi )^{-1}(U)=\Phi ^{-1}(U_{1})\sqcup \ldots \sqcup \Phi ^{-1}(U_{\overline{p}})$. By Theorem \ref{th2}, the map $\Phi $ is the projection of a smooth fiber bundle. Denote by $\widetilde{F}_{j}=\Phi ^{-1}\left (e^{i(\frac{t+2\pi j}{\overline{p}}}\right )$ its fibers at the points of $\mu ^{-1}(e^{it})$; then there exist diffeomorphisms $h_{j}\colon U_{j}\times \widetilde{F}_{j}\to \Phi ^{-1}(U_{j})$ for $j=1\ldots ,\overline{p}$. Define a map $h\colon U\times \left (\bigsqcup _{j=1}^{\overline{p}}\widetilde{F}_{j}\right )\to \bigsqcup _{j=1}^{\overline{p}}\Phi ^{-1}(U_{j})$ by $$h(u,z)=h_{j}\left ((\mu |_{U_{j}})^{-1}(u),z\right )\textrm{   for }z\in \widetilde{F}_{j}\;.$$ Then $h$ is a diffeomorphism with inverse $h^{-1}(w)=(\mu ,\textrm{id})\circ h_{j}^{-1}(w)$ for $w\in \Phi ^{-1}(U_{j})$.  
\end{proof}

Now we are prepared to prove the following. 

\begin{Theorem} \label{th4} For $\epsilon \leq \epsilon _{0}$, the space $\left (\{\epsilon \}\times L_{p,q}\right )\backslash K$ is a smooth fiber bundle over $S^{1}$ with projection mapping $\Psi $. 
\end{Theorem}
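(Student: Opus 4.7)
The plan is to derive the fiber-bundle structure of $\Psi$ from that of $\mu \circ \Phi$ (Lemma \ref{lemma4}) by passing to the quotient along the covering map $\pi$. Fix $e^{it_{0}} \in S^{1}$ and choose a contractible open arc $V \subset S^{1}$ around $e^{it_{0}}$ small enough that (i) $\mu$ splits over $V$ as a disjoint union $\mu^{-1}(V) = V_{1} \sqcup \cdots \sqcup V_{\overline{p}}$ with each $\mu|_{V_{j}}$ a diffeomorphism, and (ii) Lemma \ref{lemma4} applies over $V$. It suffices to produce a local trivialization of $\Psi^{-1}(V) \to V$.

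The next step is a reduction to a single sheet. By commutativity $\Psi \circ \pi = \mu \circ \Phi$, we have $\pi^{-1}(\Psi^{-1}(V)) = (\mu \circ \Phi)^{-1}(V) = \bigsqcup_{j} \Phi^{-1}(V_{j})$. The subgroup $H = \langle \zeta^{\overline{p}} \rangle \subset G_{p,q}$ has order $\gcd(k,p)$ and fixes each sheet pointwise on $S^{1}$ (because $\zeta^{\overline{p} k}=1$), whereas the quotient $G_{p,q}/H \cong \ZZ_{\overline{p}}$ permutes the sheets $V_{1}, \ldots, V_{\overline{p}}$ simply transitively (once $V$ is small enough that no nontrivial power of $\zeta^{k}$ maps $V_{1}$ back to itself). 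Consequently every $G_{p,q}$-orbit in $(\mu \circ \Phi)^{-1}(V)$ meets $\Phi^{-1}(V_{1})$ in exactly one $H$-orbit, so the restriction $\pi_{0} := \pi|_{\Phi^{-1}(V_{1})} \colon \Phi^{-1}(V_{1}) \to \Psi^{-1}(V)$ is a smooth $|H|$-fold covering, identifying $\Psi^{-1}(V) \cong \Phi^{-1}(V_{1})/H$. This already shows that $\Psi$ is a smooth submersion.

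It remains to trivialize this quotient bundle. By Theorem \ref{th2} (shrinking $V$ so that $V_{1}$ is contractible), $\Phi\colon \Phi^{-1}(V_{1}) \to V_{1}$ is a trivial smooth bundle with Milnor fiber $\widetilde{F}$, and $H$ acts freely on the total space preserving each $\Phi$-fiber (because $\zeta^{\overline{p}}$ acts trivially on $S^{1}$). The main obstacle is that a naive trivialization $\Phi^{-1}(V_{1}) \cong V_{1} \times \widetilde{F}$ need not be $H$-equivariant: the induced action of $H$ on the fiber could twist with the base parameter. To cure this, I would pick any smooth vector field $\widetilde{X}$ on $\Phi^{-1}(V_{1})$ projecting under $d\Phi$ to the unit tangent field of $V_{1}$, and replace it by the $H$-average $\widetilde{X}' = \frac{1}{|H|}\sum_{h \in H} h_{*} \widetilde{X}$, which remains a $\Phi$-lift of the same vector field and is now $H$-invariant. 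Provided $V$ is small enough that the flow of $\widetilde{X}'$ is defined for all base parameters in $V_{1}$, integrating the flow yields an $H$-equivariant diffeomorphism $V_{1} \times \widetilde{F} \cong \Phi^{-1}(V_{1})$ in which $H$ acts only on the $\widetilde{F}$-factor. Passing to the $H$-quotient and identifying $V_{1} \cong V$ via $\mu|_{V_{1}}$ finally produces the desired local trivialization $\Psi^{-1}(V) \cong V \times (\widetilde{F}/H)$, whose fiber agrees with the surface $F_{t}$ already produced in Proposition \ref{prop2}.
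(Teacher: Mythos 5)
Your overall strategy coincides with the paper's: both arguments descend the local triviality of $\mu \circ \Phi$ (Lemma \ref{lemma4}) through the covering $\pi$. But the descent step itself is carried out quite differently. The paper selects a ``leaf'' $Z_{1}$ of the covering $\pi \colon \Se \backslash \widetilde{K}\to L_{p,q}\backslash K$ and transports the trivialization $h$ of $\mu \circ \Phi$ through $(\pi |_{Z_{1}})^{-1}$; you instead identify $\Psi ^{-1}(V)$ with $\Phi ^{-1}(V_{1})/H$ for the subgroup $H=\langle \zeta ^{\overline{p}}\rangle$ stabilizing a single $\mu$-sheet, and then insist on an $H$-equivariant trivialization of $\Phi ^{-1}(V_{1})\to V_{1}$. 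Your bookkeeping is correct ($H$ has order $\gcd (k,p)$, it preserves each $\Phi$-fiber because $\overline{p}k\equiv 0 \pmod p$, and $G_{p,q}/H$ permutes the sheets over an evenly covered arc simply transitively), and the equivariance requirement you isolate is exactly the point on which this kind of argument can founder: an arbitrary trivialization of $\Phi$ need not interact well with the deck group, so a map built from it together with $\pi$ and a local inverse of $\pi$ is not automatically injective. Your averaging idea is the standard cure for this, and in that respect your write-up is more careful than the paper's.

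The one genuine gap is the completeness of the flow of the averaged field $\widetilde{X}'$. The fibers of $\Phi$ are noncompact open surfaces, so this is not a formality, and the remedy you offer --- ``provided $V$ is small enough'' --- does not work: shrinking the base arc gives a short existence time for the flow starting at any \emph{fixed} point of the central fiber, but the maximal existence time need not be bounded below over the noncompact fiber $\Phi ^{-1}(w_{1})$, so no single $V$ need serve all starting points. (This is precisely why Ehresmann's fibration theorem assumes properness.) Without completeness you obtain only a partially defined map, not the diffeomorphism $V_{1}\times \widetilde{F}\cong \Phi ^{-1}(V_{1})$. The gap is repairable with the tools at hand: instead of averaging an arbitrary horizontal lift, average the vector field that Milnor constructs in proving Theorem \ref{th2}. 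Its defining inequalities involve $\mathrm{grad}\log f$ and are preserved by the unitary action \eqref{eq3}, since precomposing $f$ with that action only multiplies $f$ by the constant $\zeta ^{k}$ and hence shifts $\log f$ by a constant; the $H$-average therefore still satisfies the same inequalities, which are exactly what keep the flow away from $\widetilde{K}$ for all time. This yields a complete $H$-invariant field and hence the equivariant trivialization your argument needs.
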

\begin{proof} By Theorem \ref{th1}, the map $\pi \colon \Se \backslash \widetilde{K}\to \left (\{\epsilon \}\times L_{p,q}\right )\backslash K$  is a smooth covering map. Denote by $Z_{1},Z_{2}\ldots ,Z_{p}\subset \Se \backslash \widetilde{K}$ the distinct leaves of this covering. Then $\pi |_{Z_{1}}\colon Z_{1}\to \left (\{\epsilon \}\times L_{p,q}\right )\backslash K$ is a diffeomorphism. 

Choose a point $e^{it}\in S^{1}$. Denote by $\widetilde{F}_{t}=(\mu \circ \Phi )^{-1}(e^{it})$ and $F_{t}=\Psi ^{-1}(e^{it})$ its fibers. By Lemma \ref{lemma4}, the map $\mu \circ \Phi \colon \Se \backslash \widetilde{K}\to S^{1}$ is the projection of a smooth fiber bundle, so there exists a neighbourhood $U\subset S^{1}$ of $e^{it}$ and a diffeomorphism $h\colon U\times \widetilde{F}_{t}\to (\mu \circ \Phi )^{-1}(U)$. Define a map $g\colon U\times F_{t}\to \psi ^{-1}(U)$ by $$g(u,\pi (x,y))=\pi \left (h\left (u,(\pi |_{Z_{1}})^{-1}(\pi (x,y))\right )\right )\;.$$ Since $\pi |_{Z_{1}}$ is a diffeomorphism, $g$ is a well-defined smooth map. Moreover, $g$ is a diffeomorphism with inverse $g^{-1}(\pi (x,y))=(\textrm{id},\pi )\circ h^{-1}\left ((\pi |_{Z_{1}})^{-1}(\pi (x,y))\right )$. 
\end{proof}

\begin{corollary} \label{cor2} Every algebraic link in a lens space is fibered. 
\end{corollary}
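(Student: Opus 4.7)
The plan is essentially to unwind the definition of a fibered link and invoke Theorem \ref{th4} directly. Recall that a link $K$ in a closed 3-manifold $M$ is called fibered precisely when $M\setminus K$ is the total space of a smooth fiber bundle over $S^{1}$. So once we have a candidate projection map out of the complement and know it to be a locally trivial fibration, there is nothing left to do.

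Let $K$ be an arbitrary algebraic link in $L_{p,q}$, and let $f$ be a $(p,q)$-invariant polynomial witnessing this (so that the lift $\widetilde{K} = f^{-1}(0)\cap S_{\epsilon}^{3}$ represents $K$ up to isotopy). The one technical point is that the fibration statement requires the radius $\epsilon$ to be small enough for Lemma \ref{lemma3} to apply, so I would first note that one may isotope $K$ within $L_{p,q}$ to the image of $\widetilde{K}\cap S_{\epsilon}^{3}$ for some $\epsilon \leq \epsilon_{0}$; this is legitimate because the definition of algebraic link only requires existence of \emph{some} $\epsilon$ and because fiberedness is an isotopy invariant of the link.

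With $\epsilon \leq \epsilon_{0}$ chosen, Theorem \ref{th4} gives directly that $(\{\epsilon\}\times L_{p,q})\setminus K$ is a smooth fiber bundle over $S^{1}$ with projection $\Psi$. Identifying $\{\epsilon\}\times L_{p,q}$ with $L_{p,q}$ via the diffeomorphism coming from the pair $(\rho,\tau)$ introduced at the start of this section, this is exactly the statement that $L_{p,q}\setminus K$ fibers over $S^{1}$. Hence $K$ is fibered.

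I do not expect any real obstacle here; the entire content of the corollary has already been packaged into Theorem \ref{th4}, and the only thing to be careful about is the radius condition $\epsilon \leq \epsilon_{0}$, which is handled by an isotopy argument. The writeup should therefore be only a few lines.
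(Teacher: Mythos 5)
Your proposal is correct and matches the paper's (implicit) argument: the corollary is stated immediately after Theorem \ref{th4} precisely because it is a direct restatement of that theorem in the language of fibered links. Your extra care about shrinking $\epsilon$ below $\epsilon_{0}$ and identifying $\{\epsilon\}\times L_{p,q}$ with $L_{p,q}$ is a reasonable tidying-up of details the paper leaves unstated, but it does not change the route.
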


The relationship between the fibre bundles $\Se \backslash \widetilde{K}$ and $L_{p,q}\backslash K$ can also be described in terms of monodromy. Let $g\colon \widetilde{F}\to \widetilde{F}$ denote the monodromy map of the fiber bundle $\mu \circ \Phi \colon \Se \backslash \widetilde{K}\to S^{1}$, thus $\Se \backslash \widetilde{K}$ is diffeomorphic to $(\widetilde{F}\times I)/_{(x,0)\sim (g(x),1)}$. Denote by $F$ the fiber of the fibre bundle $\Psi \colon L_{p,q}\backslash K\to S^{1}$. The surface $\widetilde{F}$ may be decomposed into $p$ fundamental parts $\widetilde{F}_{1},\ldots ,\widetilde{F}_{p}$, so that $\pi |_{\widetilde{F}_{i}}\colon \widetilde{F}_{i}\to F$ is a diffeomorphism for each $i$. Then the monodromy map of the fiber bundle $\Psi \colon L_{p,q}\backslash K\to S^{1}$ is given by $\pi \circ g\circ (\pi |_{\widetilde{F}_{1}})^{-1}\colon F\to F$. \\

Recall that for a fibered link in $S^{3}$, a fiber surface is the unique surface that realizes the minimal Seifert genus of the link \cite{KO}, \cite{ST}. This fact generalizes to links in an arbitrary 3-manifold:

\begin{proposition}\cite[Proposition 4.1]{EN} \label{prop3}
If $L$ is a fibered link in a 3-manifold $M$, then any fiber is a minimal Seifert surface and conversely, any minimal Seifert surface is isotopic to a fiber.   
\end{proposition}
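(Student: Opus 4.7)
The plan is to prove both directions by relating an arbitrary Seifert surface to the fibration $\phi\colon M\setminus L\to S^{1}$ through a sequence of surface modifications that do not increase genus. Write $F_{0}=\overline{\phi^{-1}(\mathrm{pt})}$ for a fiber (Seifert surface coming from the fibration), and let $S$ be an arbitrary Seifert surface for $L$. I would put $S$ in general position with respect to $\phi$, so that the restriction of $\phi$ to the interior of $S$ is a Morse function with finitely many critical points, and the boundary behaves compatibly with $L$.

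The first (and technically central) step is a Stallings-type surgery argument: eliminate the critical points of $\phi|_{S}$ without increasing $-\chi(S)$. At each saddle, one looks in a tubular neighborhood of a regular fiber, where the fibration gives a local product structure $F\times I$, and uses a vertical compressing disk to surger $S$. Indices $0$ and $2$ critical points correspond to innermost caps that may be cut off (or glued in) from a disjoint sphere component without changing the underlying surface up to isotopy, and saddles are removed by the disk surgeries just described. Each such surgery either strictly decreases $-\chi$ or keeps it fixed while strictly reducing the number of critical points of $\phi|_{S}$. After finitely many steps one obtains a surface $S'$ with $\phi|_{S'}$ a submersion, and then $S'$ is a union of fibers; since $S'$ is connected and has boundary $L$, one concludes $S'$ is isotopic to $F_{0}$.

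The converse (minimal genus implies isotopic to a fiber) is then a consequence of the first direction applied in reverse: if $S$ is minimal genus, then $-\chi(S)\leq -\chi(F_{0})$, but the argument above produces $S'\simeq F_{0}$ with $-\chi(S')\leq -\chi(S)$, forcing $-\chi(S')=-\chi(S)=-\chi(F_{0})$. Running the chain of inequalities backwards, no surgery in the process could have strictly decreased $-\chi$, so all of them are genus-preserving ambient isotopies (after standard cancellation of handle pairs), and $S$ is isotopic to $F_{0}$.

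The main obstacle I expect is justifying the existence of the surgery disks in the general 3-manifold setting, since in $M\setminus L$ essential compressing disks need not exist a priori. The fibered hypothesis is exactly what rescues this: on each saddle the local product structure of $\phi$ provides the required compressing disks, embedded in $M\setminus L$ with boundary on $S$. A cleaner but heavier alternative, which I would fall back on if the surgery argument becomes cumbersome, is to invoke Thurston's norm on $H_{2}(M,L;\mathbb{Z})$: a fiber is always norm-minimizing, any norm-minimizing representative in a fibered class is isotopic to a fiber, and a minimal-genus Seifert surface of $L$ represents the same homology class as $F_{0}$ since both are bounded by $L$. This reroutes the argument through well-developed machinery and avoids delicate Morse-theoretic bookkeeping.
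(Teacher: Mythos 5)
The paper offers no proof of this statement: it is imported verbatim from Eisenbud and Neumann \cite[Proposition 4.1]{EN}, so there is nothing internal to compare your argument against. On its own merits, your primary (Morse-theoretic) argument has a genuine gap at its terminal step. A properly embedded surface $S'$ on which $\phi|_{S'}$ is a submersion is \emph{transverse} to the fibers of $\phi$; it is therefore itself fibered over $S^{1}$ with one-dimensional fibers and must be a disjoint union of annuli and tori --- it is not ``a union of fibers'' of $\phi$. For that you would need $\phi|_{S'}$ to be locally constant. Worse, the process cannot terminate where you want it to: because $[S]=[F]$ in $H_{2}(M\setminus \nu L,\partial)$ (true at least in the lens-space setting the paper needs), the class $(\phi|_{S})^{*}[d\theta]$ vanishes in $H^{1}(S)$, so $\phi|_{S}$ lifts to a real-valued function that is constant on each longitude of $\partial S$; a nowhere-critical such function forces every component of an orientable $S$ to be an annulus. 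Hence no sequence of $\chi$-non-increasing compressions can eliminate all critical points while leaving a Seifert surface of negative Euler characteristic, and the induction has no correct end state. The derivation of the converse by ``running the inequalities backwards'' inherits this problem; in addition, a $\chi$-preserving compression is not automatically an ambient isotopy without an irreducibility argument, and the existence of essential vertical compressing disks at each saddle is asserted rather than proved.

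Your fallback through the Thurston norm is essentially the right route and is close to how the result is proved in the generality the paper uses it: fibers are norm-minimizing in their class, a norm-minimizing surface in a fibered class is isotopic to the fiber, and $H_{2}(L_{p,q})=0$ guarantees that all Seifert surfaces of $L$ lie in one relative class. Two caveats remain if you go this way: the norm measures $\chi_{-}$ rather than genus, so for a link one must check that the minimizer can be taken to be a connected surface whose boundary consists of one longitude on each component of $L$; and in a general $3$-manifold with $H_{2}(M)\neq 0$ two Seifert surfaces need not be homologous rel boundary, so the statement as written really requires the homology-sphere or lens-space hypothesis under which Eisenbud and Neumann work. The elementary alternative, which avoids the norm machinery entirely, is to use that the fiber is $\pi_{1}$-injective (the infinite cyclic cover is $F\times\RR$), that the complement of a fiber is the product $F\times I$, and that a minimal-genus (hence incompressible) Seifert surface, isotoped to meet $F$ minimally, becomes a horizontal surface in $F\times I$ and is therefore parallel to $F$.
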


\begin{corollary}\label{cor4}
Let $K$ be an algebraic link in $L_{p,q}$, and let $\Psi \colon L_{p,q}\backslash K\to S^{1}$ be the corres\-pon\-ding fibration. Then the closure of each fiber $F_{t}=\Psi ^{-1}(e^{it})$ in $L_{p,q}$ is a Seifert surface of minimal genus for $K$. 
\end{corollary}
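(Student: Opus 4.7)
The statement is an almost immediate consequence of results established earlier in the section, so my plan is to assemble them rather than to do new work. The three ingredients I need are: (i) the fibration structure given by Theorem \ref{th4}, which tells me that $\Psi \colon L_{p,q}\backslash K\to S^{1}$ is a smooth fiber bundle; (ii) Lemma \ref{lemma5}, which identifies the boundary of the closure of each level set $F_{t}$ with $K$; and (iii) Proposition \ref{prop3}, which asserts that every fiber of a fibered link in a 3-manifold realizes the minimal Seifert genus.

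The plan is to argue as follows. First, invoke Theorem \ref{th4} to conclude that $K$ is a fibered link in $L_{p,q}$ with fibration $\Psi$; this is exactly Corollary \ref{cor2}, but now I need the fiber itself rather than merely the existence of a fibration. Second, fix $t\in [0,2\pi)$ and consider the closure $\overline{F_{t}}$ of $F_{t}=\Psi^{-1}(e^{it})$ in $L_{p,q}$; by Proposition \ref{prop2} the level set $F_{t}$ is a smooth embedded surface in $L_{p,q}\setminus K$, and by Lemma \ref{lemma5} its topological boundary is precisely $K$, so $\overline{F_{t}}$ is a Seifert surface for $K$ in the sense required. Third, apply Proposition \ref{prop3} with $M=L_{p,q}$ and $L=K$ to conclude that this fiber realizes the minimal Seifert genus of $K$.

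I expect no real obstacle: the statement is genuinely a corollary, and every nontrivial fact has already been proved. The only minor subtlety worth flagging explicitly in the write-up is that Proposition \ref{prop3} speaks of ``any fiber'' of a fibered link in an arbitrary 3-manifold $M$, so one should make clear that the hypothesis is met — namely that $\Psi$ really is the projection of a fiber bundle over $S^{1}$ with fiber-closures bounding $K$. Once that is observed, the conclusion follows in a single line.
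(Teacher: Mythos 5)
Your proposal is correct and matches the paper's own proof, which also cites exactly Theorem \ref{th4}, Proposition \ref{prop3} and Lemma \ref{lemma5} (your extra appeal to Proposition \ref{prop2} for smoothness of $F_{t}$ is harmless and implicit in the paper's argument). Nothing further is needed.
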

\begin{proof} It follows immediately from Theorem \ref{th4}, Proposition \ref{prop3} and Lemma \ref{lemma5}. 
\end{proof}

The genus of the fiber $F_{t}$ is related to the genus of $\pi ^{-1}(F_{t})=\widetilde{F}_{t}$. In case of an algebraic knot in a lens space, we obtain:

\begin{proposition} \label{prop4} Let $K$ be an algebraic knot in $L_{p,q}$, and let $\widetilde{K}$ be its lift in $S^{3}$. Denote by $g$ (respectively $\widetilde{g}$) the Seifert genus of $K$ (respectively $\widetilde{K}$). Then we have $$g=\frac{2\widetilde{g}+p+\gcd (p,k)-2}{2\gcd (p,k)}\;.$$
\end{proposition}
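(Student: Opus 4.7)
The plan is to compute the Euler characteristic of a fiber of $\Psi$ in two ways, using the covering $\pi$ on one side and the factorization through $\mu$ on the other, and then solve for $g$.

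First I would identify the relevant surfaces. By Corollary \ref{cor4}, a fiber $F_{t}=\Psi^{-1}(e^{it})$ realizes the Seifert genus $g$ of $K$, and by Lemma \ref{lemma5} its closure in $L_{p,q}$ has boundary $K$, a single component. Similarly, by \cite[Theorem 4.8]{MIL} together with Proposition \ref{prop3}, a Milnor fiber $\widetilde{F}=\Phi^{-1}(e^{is})$ realizes the Seifert genus $\widetilde{g}$ of $\widetilde{K}$, and by Corollary \ref{cor3}(a) its boundary $\widetilde{K}$ has $p$ components. Assuming connectedness of both $F_{t}$ and $\widetilde{F}$ (the latter is classical for algebraic links; the former follows because $F_{t}$ is a minimal-genus Seifert surface of the knot $K$ and so can be taken connected), their Euler characteristics are $\chi(F_{t})=1-2g$ and $\chi(\widetilde{F})=2-2\widetilde{g}-p$.

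Next, set $\widetilde{F}_{t}=(\mu\circ\Phi)^{-1}(e^{it})$. By Lemma \ref{lemma4}, $\widetilde{F}_{t}$ is the disjoint union of $\overline{p}=p/\gcd(p,k)$ copies of $\widetilde{F}$, giving
\[
\chi(\widetilde{F}_{t})=\overline{p}\,(2-2\widetilde{g}-p).
\]
On the other hand, the commutative square in the excerpt yields $\widetilde{F}_{t}=\pi^{-1}(F_{t})$, and restricting the $p$-fold covering $\pi\colon \Se\setminus\widetilde{K}\to L_{p,q}\setminus K$ to preimages gives a $p$-fold covering $\pi|_{\widetilde{F}_{t}}\colon \widetilde{F}_{t}\to F_{t}$. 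By multiplicativity of Euler characteristic under finite coverings,
\[
\chi(\widetilde{F}_{t})=p\,\chi(F_{t})=p\,(1-2g).
\]
Equating the two expressions and dividing by $p$ gives $(2-2\widetilde{g}-p)/\gcd(p,k)=1-2g$, which rearranges to the claimed formula
\[
g=\frac{2\widetilde{g}+p+\gcd(p,k)-2}{2\gcd(p,k)}.
\]

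The main obstacle is the connectedness claim for $F_{t}$: without it, $\chi(F_{t})$ could be $1-2g+(\text{number of extra components})$, and the formula would fail. I would handle this either by reducing to a connected fiber (the minimal-genus Seifert surface for a knot may be assumed connected, and by Proposition \ref{prop3} it is isotopic to a fiber), or by a direct long-exact-sequence argument showing that $\pi_{1}(L_{p,q}\setminus K)\to\pi_{1}(S^{1})$ is surjective because its lift already is for $\Phi$. Everything else is a careful bookkeeping of boundary components and a one-line algebraic manipulation.
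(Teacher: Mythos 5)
Your proposal is correct and follows essentially the same route as the paper: both compute the Euler characteristic of $\widetilde F_t=(\mu\circ\Phi)^{-1}(e^{it})$ twice, once as $\overline{p}\,(2-2\widetilde g-p)$ using Lemma \ref{lemma4} and the $p$ boundary components from Corollary \ref{cor3}, and once as $p\,\chi(F_t)=p(1-2g)$ via the $p$-fold covering $\pi$, then solve for $g$. The only difference is that you explicitly flag and justify the connectedness of $F_t$, a point the paper passes over in silence (it only cites connectedness of the Milnor fiber), so your extra care is a small improvement rather than a divergence.
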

\begin{proof} Denote by $\widetilde{F}_{t}=(\mu \circ \Phi )^{-1}(e^{it})$ (respectively $F_{t}=\Psi ^{-1}(e^{it})$) the fibers that represent interiors of the Seifert surfaces for $\widetilde{K}$ (respectively $K$) . The covering $\pi \colon \widetilde{F}_{t}\to F_{t}$ is cyclic of order $p$, so $\chi (\widetilde{F}_{t})=p\cdot \chi (F_{t})$. By Lemma \ref{lemma4}, $\widetilde{F}_{t}$ is a disjoint union of $\overline{p}$ diffeomorphic copies of the fiber $\Phi ^{-1}(e^{it})$, which in turn is connected by \cite[Corollary 6.3]{MIL}. The boundary of $\widetilde{F}_{t}$ in $\Se $ has $p$ components by Corollary \ref{cor3}. Since $\overline{p}=\frac{p}{\gcd (p,k)}$, we may compute
\begin{xalignat*}{1}
& \overline{p}(2-2\widetilde{g}-p)=p(1-2g)\\
& \frac{p}{\gcd (p,k)}(2-2\widetilde{g}-p)=p(1-2g)\\
& 2-2\widetilde{g}-p=\gcd (p,k)(1-2g)
\end{xalignat*} and solving for $g$ yields the expression above. 
\end{proof}

\section{Examples}\label{sec3}

In this Section, we find some examples of algebraic knots and links in lens spaces. We concentrate on the case when the lift of the algebraic knot/link in the 3-sphere is a torus link. 

\begin{proposition} \label{prop5} Let $a,b,p$ and $q$ be integers, with $p$ and $q$ relatively prime. The torus link $T(a,b)$ is the lift of an algebraic link in $L_{p,q}$ $\Leftrightarrow $ $a\equiv qb\textrm{ (mod p)}$.  
\end{proposition}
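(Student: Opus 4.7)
The two implications will be handled separately. For $(\Leftarrow)$, the strategy is purely computational. Assuming $a\equiv qb\pmod{p}$, set $f(x,y)=x^{a}+y^{b}$ and compute
$$f(\zeta x,\zeta^{q}y)=\zeta^{a}x^{a}+\zeta^{qb}y^{b}=\zeta^{k}f(x,y),$$
where $k\in\{0,\dots,p-1\}$ is the common residue of $a$ and $qb$ modulo $p$. This shows that $f$ is $(p,q)$-invariant. Since the classical identification gives $f^{-1}(0)\cap\Se =T(a,b)$, Proposition \ref{prop1} produces $T(a,b)$ as the lift of an algebraic link in $L_{p,q}$.

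For the direction $(\Rightarrow)$, suppose $T(a,b)$ is the lift of an algebraic link via a $(p,q)$-invariant polynomial $g(x,y)=\sum c_{mn}x^{m}y^{n}$, with $g(\zeta x,\zeta^{q}y)=\zeta^{k}g(x,y)$ for some $0\leq k<p$. Comparing coefficients term by term, invariance forces $c_{mn}\neq 0\Rightarrow m+qn\equiv k\pmod{p}$. The plan is to show that the monomials $x^{a}$ and $y^{b}$ both appear in $g$ with nonzero coefficient. Applying the invariance relation to $(m,n)=(a,0)$ will give $a\equiv k\pmod{p}$, and to $(m,n)=(0,b)$ will give $qb\equiv k\pmod{p}$, from which the desired congruence $a\equiv qb\pmod{p}$ follows.

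The key analytic step is therefore extracting those two nonzero coefficients from the topological hypothesis $g^{-1}(0)\cap\Se\simeq T(a,b)$. For this I would invoke the classical intersection-multiplicity formula of Milnor: the intersection multiplicity of two complex plane curves at the origin equals the linking number of the associated links in $\Se$. Applied to $V=g^{-1}(0)$ and the smooth curve $\{y=0\}$, the intersection multiplicity at the origin equals $\min\{m:c_{m,0}\neq 0\}$, the order of vanishing of $g(x,0)$ at $x=0$. On the other hand, it equals the linking number of $V\cap\Se$ with the Hopf core $\{y=0\}\cap\Se$, which for the torus link $T(a,b)$ is $a$ (this is the meridian winding number in the solid torus whose core is the $x$-axis circle). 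Thus $\min\{m:c_{m,0}\neq 0\}=a$, forcing $c_{a,0}\neq 0$; the symmetric computation for $\{x=0\}$ yields $c_{0,b}\neq 0$.

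The main obstacle I expect is an orientation-and-convention subtlety: although $T(a,b)\cong T(b,a)$ as abstract links, the two axis circles in $\Se$ are distinguished by the $(p,q)$-action and by the roles of $x$ and $y$ in the polynomial, so one must verify that the labelling making linking numbers $a$ and $b$ line up correctly with the statement. I would resolve this by aligning conventions with Section \ref{subs11}, where $T(a,b)$ is characterised as the zero set of $x^{a}+y^{b}$, and by a direct verification that, in the standard embedding produced from this polynomial, the linking number of $T(a,b)$ with $\{y=0\}\cap\Se$ is $a$.
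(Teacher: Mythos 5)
Your $(\Leftarrow)$ direction is exactly the paper's argument: check that $f(x,y)=x^{a}+y^{b}$ satisfies $f(\zeta x,\zeta ^{q}y)=\zeta ^{k}f(x,y)$ precisely when $a\equiv qb\ (\textrm{mod } p)$, then invoke Proposition \ref{prop1}. In fact the paper proves only this computation and treats it as establishing both implications, implicitly identifying ``$T(a,b)$ is the lift of an algebraic link'' with ``the model polynomial $x^{a}+y^{b}$ is $(p,q)$-invariant''. You correctly notice that the definition of an algebraic link in $L_{p,q}$ quantifies over \emph{all} $(p,q)$-invariant polynomials, so $(\Rightarrow )$ really requires an argument for an arbitrary invariant $g$ whose link is merely isotopic to $T(a,b)$; your observation that invariance forces every monomial $x^{m}y^{n}$ appearing in $g$ to satisfy $m+qn\equiv k\ (\textrm{mod } p)$ is correct and is the right starting point.

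However, the key analytic step you propose has a genuine gap. The linking number of $g^{-1}(0)\cap \Se $ with the axis circle $\{y=0\}\cap \Se $ does equal the intersection multiplicity $\min \{m\, :\, c_{m,0}\neq 0\}$, as you say, but this quantity is \emph{not} an invariant of the abstract isotopy class of the link in $S^{3}$: it depends on the position of the curve relative to the chosen axis. For instance, $y^{2}-x^{3}$ and $(y-x)^{2}-x^{3}$ both have the trefoil $T(3,2)$ as their link, yet their orders of vanishing along $y=0$ are $3$ and $2$ respectively. Since the hypothesis only provides an isotopy of links in $S^{3}$ --- which need not fix the coordinate axes, and can even swap the two solid tori, exchanging the roles of $a$ and $b$ --- you cannot conclude $c_{a,0}\neq 0$ and $c_{0,b}\neq 0$. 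The convention-fixing you suggest at the end does not repair this, because the ambiguity lives in the isotopy, not in the labelling of $T(a,b)$. What your congruence observation does yield is that the two extremal exponents $m_{0}=\min \{m:c_{m,0}\neq 0\}$ and $n_{0}=\min \{n:c_{0,n}\neq 0\}$ (when neither axis is a branch of $g=0$) satisfy $m_{0}\equiv qn_{0}\ (\textrm{mod } p)$; relating $(m_{0},n_{0})$ to $(a,b)$ requires further input. To be fair, this is a gap that the paper's own proof silently steps over by working only with the model polynomial $x^{a}+y^{b}$.
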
 
\begin{proof} The torus link $T(a,b)$ is the link of the singularity $(0,0)\in \CC ^{2}$ of the complex polynomial $f(x,y)=x^{a}+y^{b}$. Let $\zeta \in \CC $ be a primitive $p$-th root of unity. Then we have $f(\zeta x,\zeta ^{q}y)=\zeta ^{a}x^{a}+\zeta ^{qb}y^{b}$ and $\zeta ^{k}f(x,y)=\zeta ^{k}(x^{a}+y^{b})$. The polynomial $f$ is $(p,q)$-invariant if and only if there exists an integer $0\leq k<p$, such that the equality $$(\zeta ^{a}-\zeta ^{k})x^{a}+(\zeta ^{qb}-\zeta ^{k})y^{b}=0$$ holds for every $(x,y)\in \CC ^{2}$, which is equivalent to $a\equiv qb\textrm{ (mod p)}$.   
\end{proof}

\begin{corollary} \label{cor5} Let $a,b,p$ and $q$ be integers, with $p$ and $q$ relatively prime. The torus link $T(a,b)$ is the lift of an algebraic knot in $L_{p,q}$ if and only if $\gcd (a,b)=p$. In this case, the Seifert genus of the knot equals $g=\frac{\widetilde{g}+p-1}{p}$, where $\widetilde{g}$ is the Seifert genus of $T(a,b)$. 
\end{corollary}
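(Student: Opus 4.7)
The approach is to separate the biconditional into its two directions, using the component-count results of the previous section, and then to obtain the Seifert genus formula by specialising Proposition \ref{prop4} to the case $k=0$.

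For the \emph{only if} direction, suppose $K$ is an algebraic knot in $L_{p,q}$ whose lift is $T(a,b)$. Corollary \ref{cor3}(a) tells us that $\widetilde{K}$ has exactly $p$ components, while the torus link $T(a,b)$ is well known to have $\gcd(a,b)$ components. Equating the two counts forces $\gcd(a,b)=p$.

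For the \emph{if} direction, assume $\gcd(a,b)=p$, so $p\mid a$ and $p\mid b$. Then $a\equiv 0\equiv qb\pmod{p}$, and Proposition \ref{prop5} guarantees that the polynomial $f(x,y)=x^a+y^b$ is $(p,q)$-invariant; consequently its algebraic link $T(a,b)=f^{-1}(0)\cap\Se$ is the lift of an algebraic link $K\subset L_{p,q}$. To confirm that $K$ is actually a knot, I would factor $f(x,y)=\prod_{j=0}^{p-1}\bigl(x^{a/p}-\omega_j\,y^{b/p}\bigr)$ over the $p$-th roots $\omega_j$ of $-1$, identify the $p$ branches with the components of $\widetilde{K}$, and check that the generator $\zeta$ of $G_{p,q}$ cycles these branches, so that the $p$ components of $\widetilde{K}$ collapse to a single component in the quotient.

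For the Seifert genus, the divisibilities $p\mid a$ and $p\mid b$ give $f(\zeta x,\zeta^q y)=\zeta^a x^a+\zeta^{qb}y^b=x^a+y^b=f(x,y)$, which means the integer $k$ from the definition of $(p,q)$-invariance equals $0$. Hence $\gcd(p,k)=p$, and plugging this into the formula of Proposition \ref{prop4} simplifies as
\begin{xalignat*}{1}
& g=\frac{2\widetilde{g}+p+\gcd(p,k)-2}{2\gcd(p,k)}=\frac{2\widetilde{g}+p+p-2}{2p}=\frac{\widetilde{g}+p-1}{p}\;.
\end{xalignat*}
The step that carries the real content is the transitivity of the $G_{p,q}$-action on the branches in the \emph{if} direction, which must be extracted from the explicit factorisation above; once this is in place, the remaining assertions are immediate consequences of Corollary \ref{cor3}(a), Proposition \ref{prop5} and Proposition \ref{prop4}.
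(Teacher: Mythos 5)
Your route coincides with the paper's proof step for step: the \emph{only if} direction from Corollary \ref{cor3}(a) and the fact that $T(a,b)$ has $\gcd(a,b)$ components; the \emph{if} direction from Proposition \ref{prop5} together with an analysis of how $G_{p,q}$ permutes the $p$ branches of $x^{a}+y^{b}=0$; and the genus formula from $k=0$, $\gcd(p,k)=p$ substituted into Proposition \ref{prop4}. Everything you actually carry out is correct and matches the paper.

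The one step you defer --- verifying that $\zeta$ acts transitively on the $p$ branches --- is precisely the step the paper also passes over with the bare assertion that the action ``induces a cyclic permutation of these branches,'' and it is where the argument is genuinely incomplete. If you perform the factorisation you propose, writing $a=a_{1}p$, $b=b_{1}p$ and $x^{a}+y^{b}=\prod_{\omega ^{p}=-1}(x^{a_{1}}-\omega y^{b_{1}})$, you find that $\zeta $ sends the branch $x^{a_{1}}=\omega y^{b_{1}}$ to the branch $x^{a_{1}}=\zeta ^{a_{1}-qb_{1}}\omega y^{b_{1}}$. This permutation is a single $p$-cycle if and only if $\gcd (a_{1}-qb_{1},p)=1$, and that condition can fail: for $a=b=p$ and $q=1$ one gets $a_{1}-qb_{1}=0$, so every branch (every Hopf fibre of $T(p,p)$) is invariant under the action and the quotient of $f^{-1}(0)\cap \Se $ is a $p$-component link rather than a knot. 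So the computation you flag as ``the real content'' does not simply go through: one must either choose a different $(p,q)$-invariant polynomial realising $T(a,b)$, or exploit the isotopy freedom built into the definition of an algebraic link in $L_{p,q}$ (as the paper implicitly does in Example \ref{ex4}, where $T(3,3)$ is identified diagrammatically as the lift of the knot $2_{1}$ in $L_{3,1}$). As written, both your proposal and the paper's own proof leave this point open, but your proposal at least isolates it correctly.
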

\begin{proof} Let $T(a,b)$ be the lift of an algebraic knot in $L_{p,q}$. Since the torus link $T(a,b)$ has $\gcd (a,b)$ components, Corollary \ref{cor3} implies that $\gcd (a,b)=p$. 

Conversely, if $\gcd (a,b)=p$, then $a\equiv qb\textrm{ (mod $p$)}$ for every $q$, so $T(a,b)$ is the lift of an algebraic link in $L_{p,q}$ by Proposition \ref{prop5}. The algebraic set $x^{a}+y^{b}=0$ consists of $p$ nonsingular branches, and the action of $G_{p,q}$ induces a cyclic permutation of these branches. Under the quotient map $\pi \colon \Se \to L_{p,q}$, all the $p$ components of $T(a,b)$ identify. Therefore the algebraic link in $L_{p,q}$ is actually a knot. 

To prove the last statement of the Corollary, let $a=a_{1}p$ and $b=b_{1}p$ with $\gcd (a_{1},b_{1})=1$. Since $f(x,y)=x^{a_{1}p}+y^{b_{1}p}=f(\zeta x,\zeta ^{q}y)$ for every $(x,y)\in \CC ^{2}$, we have $k=0$ and $\gcd (p,k)=p$. Using Proposition \ref{prop4}, we obtain $$g=\frac{2\widetilde{g}+p+p-2}{2p}=\frac{\widetilde{g}+p-1}{p}\;.$$ 
\end{proof}

\begin{example}\label{ex3} It follows from Corollary \ref{cor5} that the torus link $T(9,3)$ is the lift of an algebraic knot in $L(3,q)$ for $q=1,2$. The link $T(9,3)$ is the closure of a braid on 3 strands, that is given by the braid word $(\sigma _{2}\sigma _{1})^{9}$. We use Proposition \ref{prop6} with $n=3$ and note that the braid relation $\sigma _{i}\sigma _{i+1}\sigma _{i}=\sigma _{i+1}\sigma _{i}\sigma _{i+1}$ implies $(\Delta _{3})^{2}=\sigma _{2}\sigma _{1}\sigma _{1}\sigma _{2}\sigma _{1}\sigma _{1}=(\sigma _{2}\sigma _{1})^{3}$ and $(\Delta _{3})^{4}=(\sigma _{2}\sigma _{1})^{6}$. It follows that $T(9,3)$ is the lift of a knot $K_{1}$ in $L_{3,1}$, whose band diagram corresponds to the braid word $(\sigma _{2}\sigma _{1})^{2}$. $T(9,3)$ is also the lift of the knot $K_{2}$ in $L_{3,2}$, whose band diagram corresponds to the braid word $\sigma _{2}\sigma _{1}$ (see Figure \ref{fig5}). It follows from the Corollary \ref{cor2} that $K_{1}$ and $K_{2}$ are both fibered knots. 
\begin{figure}[h!]
\labellist
\normalsize \hair 2pt
\endlabellist
\begin{center}
\includegraphics[scale=0.25]{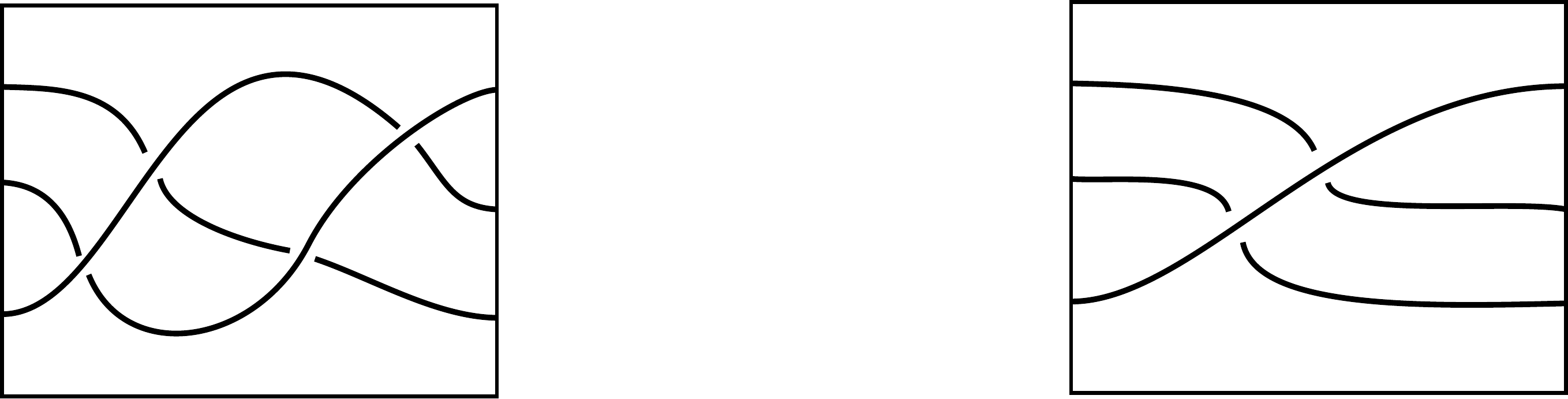}
\caption{The algebraic knots from Example \ref{ex3}: $K_1$ in $L_{3,1}$ (left) and $K_{2}$ in $L_{3,2}$ (right)}
\label{fig5}
\end{center}
\end{figure}
\end{example}

\begin{example}\label{ex4}
By Corollary \ref{cor5}, the torus link $T(3,3)$ is the lift of a knot in $L_{3,1}$. Figure \ref{fig6} shows the knot $2_1$ from the knot atlas in \cite{GA}. Figure \ref{fig7} shows that the lift in $S^{3}$ of the knot $2_1$ in $L_{3,1}$ is equivalent to the link $T(3,3)$. It follows from the Corollary \ref{cor2} that $2_1$ is a fibered knot in $L_{3,1}$. 

\begin{figure}[h!]
\labellist
\normalsize \hair 2pt
\endlabellist
\begin{center}
\includegraphics[scale=0.25]{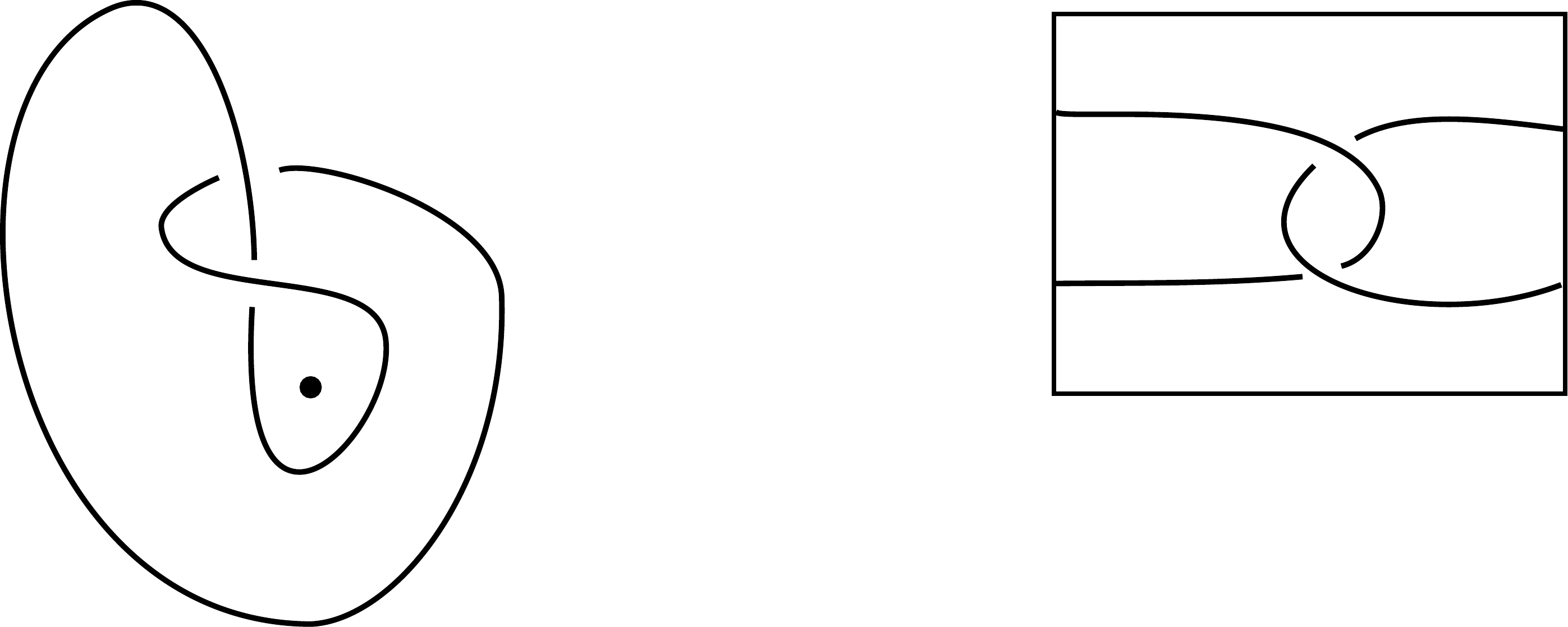}
\caption{The knot $2_1$ from the knot atlas in \cite{GA} (punctured disk diagram on the left and band diagram on the right)}
\label{fig6}
\end{center}
\end{figure}

\begin{figure}[h!]
\labellist
\normalsize \hair 2pt
\pinlabel $\sim $ at 1070 230
\pinlabel $\sim $ at 1600 230
\endlabellist
\begin{center}
\includegraphics[scale=0.25]{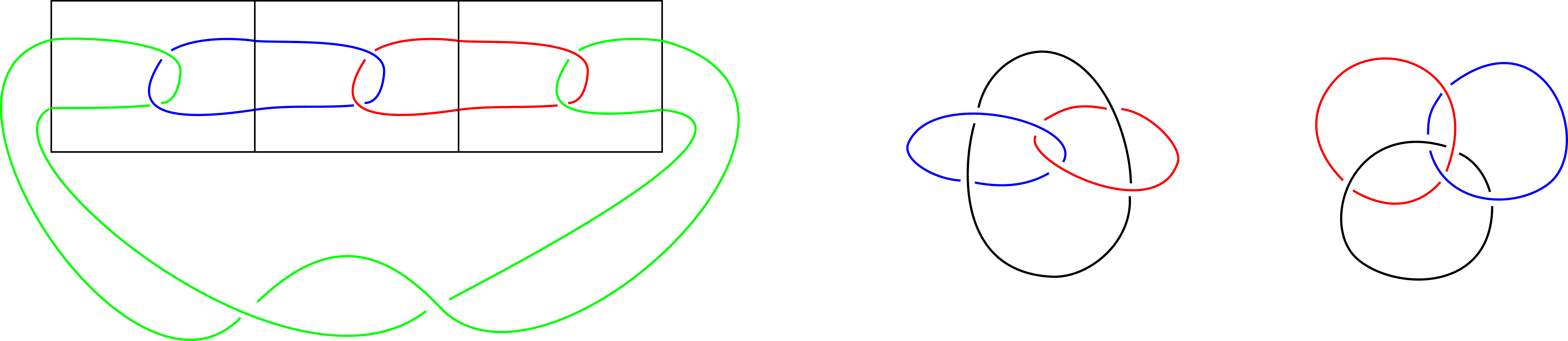}
\caption{The lift in $S^{3}$ of the knot $2_1$ in $L_{3,1}$, see Example \ref{ex4}}
\label{fig7}
\end{center}
\end{figure}
\end{example}

\begin{example}\label{ex5}
By Corollary \ref{cor5}, the torus link $T(4,2)$ (also called the Solomon's knot) is the lift of a knot in $L_{2,1}$. Indeed: Figure \ref{fig8} shows the knot $3_8$ from the knot atlas in \cite{GA}. We obtain the diagram of its lift in $S^{3}$ by Proposition \ref{prop6}. Figure \ref{fig9} shows that the lift is equivalent to the Solomon's knot. It follows by Corollary \ref{cor2} that $3_8$ is a fibered knot in $L_{2,1}$. 

\begin{figure}[h!]
\labellist
\normalsize \hair 2pt
\endlabellist
\begin{center}
\includegraphics[scale=0.25]{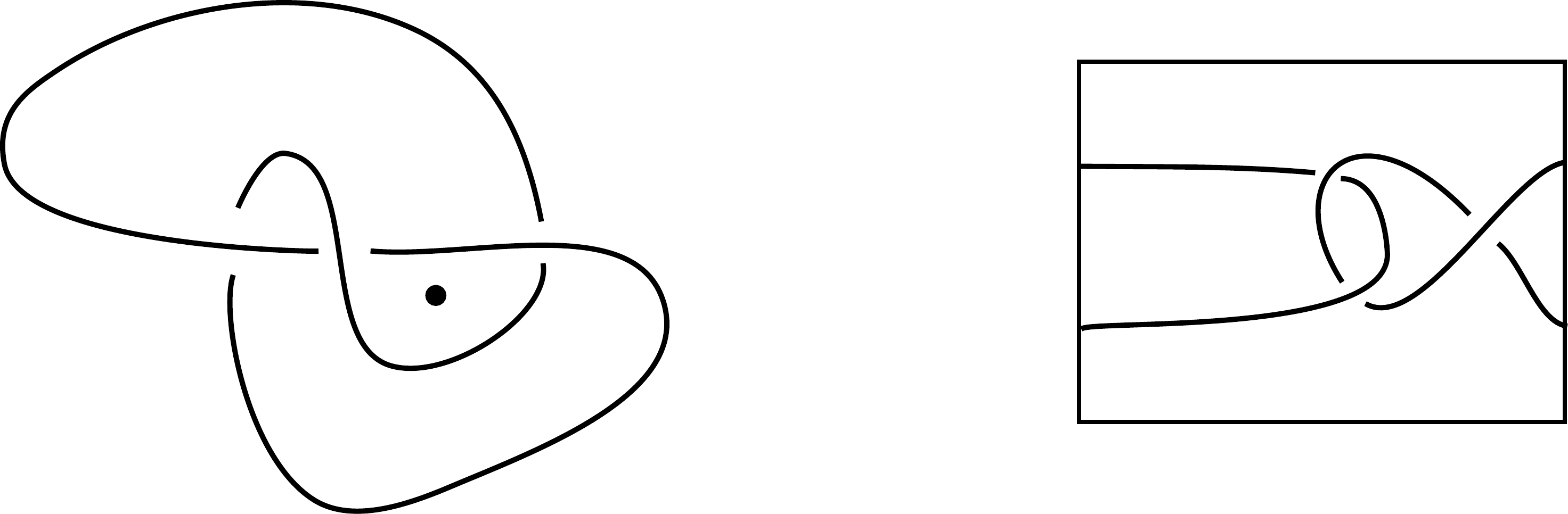}
\caption{The knot $3_8$ from the knot atlas in \cite{GA}}
\label{fig8}
\end{center}
\end{figure}

\begin{figure}[h!]
\labellist
\normalsize \hair 2pt
\pinlabel $\sim $ at 800 230
\pinlabel $\sim $ at 1325 220
\endlabellist
\begin{center}
\includegraphics[scale=0.25]{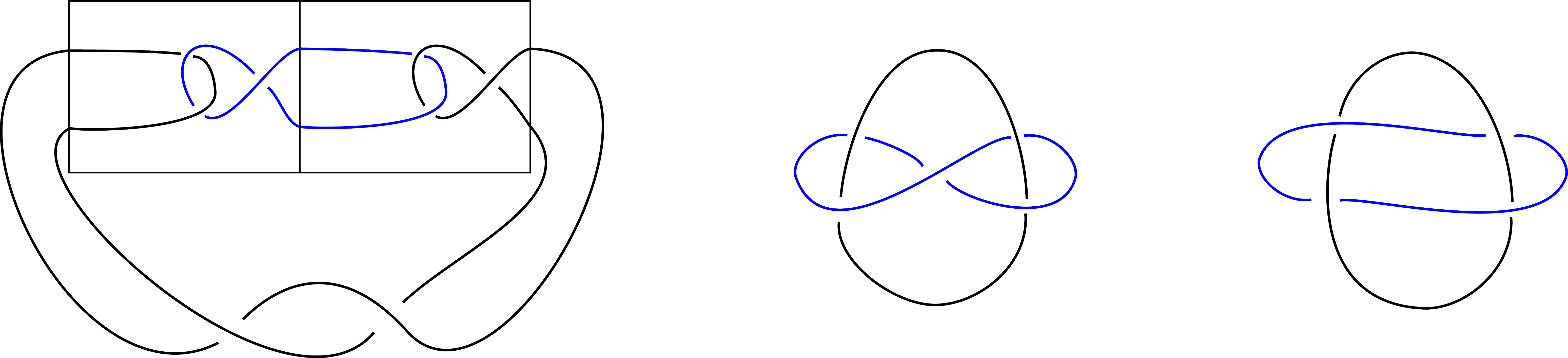}
\caption{The lift in $S^{3}$ of the knot $3_8$ in $L_{2,1}$, see Example \ref{ex5}}
\label{fig9}
\end{center}
\end{figure}
\end{example}

More generally, an ample supply of algebraic knots in $L_{p,q}$ is guaranteed by the following construction:

\begin{lemma} \label{lemma7} Let $f(x,y)$ be an irreducible complex polynomial with $f(0,0)=0$. For any pair of positive integers $p$ and $q$ with $gcd(p,q)=1$, the polynomial $f_{p}(x,y)=f(x^{p},y^{p})$ defines an algebraic knot in $L_{p,q}$.  
\end{lemma}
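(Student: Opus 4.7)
The plan is first to check that $f_p$ is $(p,q)$-invariant, then to show that the link $K_p=\pi(f_p^{-1}(0)\cap \Se)$ in $L_{p,q}$ has a single component. The invariance is immediate: since $\zeta^p=1$,
\[ f_p(\zeta x,\zeta^q y)=f\bigl((\zeta x)^p,(\zeta^q y)^p\bigr)=f(\zeta^p x^p,\zeta^{pq} y^p)=f(x^p,y^p)=f_p(x,y), \]
so $f_p$ is $(p,q)$-invariant with $k=0$. By Proposition \ref{prop1}, the lift $\widetilde{K}_p=f_p^{-1}(0)\cap \Se$ descends under $\pi\colon \Se\to L_{p,q}$ to a smooth link $K_p$.

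To show that $K_p$ is a knot, I would analyze the components of $\widetilde{K}_p$ and the action of $G_{p,q}$ on them. The map $\phi(x,y)=(x^p,y^p)$ is a branched finite cover of degree $p^2$ whose Galois group is $\mu_p\times \mu_p$, and $V_p=f_p^{-1}(0)=\phi^{-1}(V)$ for $V=f^{-1}(0)$. Irreducibility of $f$ forces $\mu_p\times \mu_p$ to permute the irreducible components of $V_p$ transitively; so if $H\subset \mu_p\times \mu_p$ is the stabilizer of a fixed component, then $V_p$ has $[\mu_p\times \mu_p:H]$ components, and the orbits of $G_{p,q}=\{(\zeta^a,\zeta^{qa}):a\in\ZZ_p\}$ on this set correspond to the double cosets $G_{p,q}\backslash(\mu_p\times \mu_p)/H$.

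The crucial step is then to verify $G_{p,q}\cdot H=\mu_p\times \mu_p$, so that a single double coset appears and $G_{p,q}$ acts transitively. Here I would exploit $\gcd(p,q)=1$ together with a local analysis: take a Puiseux parametrization of a branch of $V$ at the origin, lift it through $\phi$ by introducing $p$-th roots in each coordinate, and describe $H$ explicitly in terms of the resulting exponents. A short group-theoretic argument using coprimality of $p$ and $q$ then shows that $G_{p,q}$ meets every coset of $H$. Once this transitivity is in hand, $K_p=\pi(\widetilde{K}_p)$ is connected, hence a knot, and the resulting count of $p$ components of $\widetilde{K}_p$ is consistent with Corollary \ref{cor3}(a).

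The main obstacle is precisely this transitivity step. Unlike in Corollary \ref{cor5}, where $f=x^a+y^b$ leads to an explicit factorization of $f_p$ into the factors $y^b-\zeta^j x^a$, for a general irreducible $f$ the components of $V_p$ are governed by the full Puiseux data of $V$ at the origin, and one has to verify that $\gcd(p,q)=1$ is always strong enough to overcome whatever stabilizer $H$ the branch geometry produces. A careful case analysis, or a uniform computation phrased in terms of the Puiseux pairs of $V$, is what will carry this through.
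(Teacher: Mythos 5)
Your reduction is the right one, and you have correctly located the only nontrivial point: whether $G_{p,q}$ acts transitively on the branches of $f_p^{-1}(0)$ at the origin. But your proposal stops exactly there --- you announce that ``a careful case analysis \dots is what will carry this through'' without carrying it through, so as written this is not a proof. Worse, the gap cannot be closed, because the transitivity claim is false in general. Take $f(x,y)=x-y$, $p=3$, $q=1$. Then $f_3(x,y)=x^3-y^3=\prod_{j=0}^{2}(x-\omega^j y)$ with $\omega=e^{2\pi i/3}$, and the action $(x,y)\mapsto(\zeta x,\zeta y)$ carries each line $\{x=\omega^j y\}$ to itself; hence the three components of $f_3^{-1}(0)\cap\Se=T(3,3)$ lie in three distinct $G_{3,1}$-orbits and the quotient in $L_{3,1}$ is a $3$-component link, not a knot. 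In your double-coset language: here $H$ is the diagonal subgroup of $\mu_p\times\mu_p$, and $G_{p,q}\cdot H=\{(\zeta^a\eta,\zeta^{qa}\eta)\}$ surjects onto $(\mu_p\times\mu_p)/H$ if and only if $\gcd(q-1,p)=1$, a condition not implied by $\gcd(p,q)=1$. The same phenomenon occurs for singular $f$: for $f=x^2-y^3$, $p=5$, $q=4$, each factor $x^2-\omega^j y^3$ of $f_5$ is $G_{5,4}$-invariant, so again the quotient has $5$ components. So the orbit structure really does depend on the Puiseux data of $f$, and no uniform coprimality argument can force a single orbit.

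You should also know that the paper's own proof is no better here: it simply asserts that ``the action of $G_{p,q}$ induces a cyclic permutation of these branches,'' which is the unproved (and, by the above, false) step you flagged. A secondary issue, which both you and the paper pass over, is that irreducibility of $f$ as a polynomial does not bound the number of local branches of $f^{-1}(0)$ at the origin (e.g.\ a nodal curve such as $x^2-y^2(1+y)$ is irreducible but has two branches), so even the count of ``$p$ branches'' of $f_p$, and hence the comparison with Corollary \ref{cor3}(a), requires the additional hypothesis that the germ of $f$ at the origin is analytically irreducible. The correct conclusion of your analysis is a corrected statement, not the lemma as given: $f_p$ always defines an algebraic \emph{link} in $L_{p,q}$, and it is a knot precisely when $G_{p,q}\cdot H=\mu_p\times\mu_p$ for the stabilizer $H$ you introduced --- a condition that must be checked against the branch data of $f$ and cannot be dispensed with.
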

\begin{proof} The polynomial $f_{p}$ is obviously $(p,q)$-invariant. Since $f$ is irreducible, the algebraic curve $\{(x,y)\in \CC ^{2}\, |\, f_{p}(x,y)=0\}$ consists of $p$ branches. The action of $G_{p,q}$ induces a cyclic permutation of these branches. The link of the singularity at the origin is therefore a $p$-component link in $S^{3}$, whose quotient in $L_{p,q}$ is an algebraic knot. 
\end{proof}

\section{Open problems}\label{sec4}
\begin{enumerate}
\item Torus links represent the simplest class of classical algebraic links. Proposition \ref{prop5} and Corollary \ref{cor5} classify torus knots and links that are lifts of algebraic links in $L_{p,q}$. It would be interesting to find a topological characterization of the algebraic links in $L_{p,q}$ that lift to torus links.
\item More generally, can we find a topological characterization of \textbf{all} algebraic links in lens spaces? 
\item For a $(p,q)$-invariant polynomial $f$, one might investigate the complex algebraic curve $V=f^{-1}(0)\subset (0,\infty )\times S^{3}$ and its quotient in $\pi (V)\subset (0,\infty )\times L_{p,q}$. How do techniques that we use to study plane algebraic curves, translate to the study of $\pi (V)$? Applying them, we might gain new insight about the topology of the quotient curve. 
\item It is known that a smooth fibration $\Psi \colon M\backslash K\to S^{1}$ of a closed, orientable 3-manifold $M$ defines an \textbf{open book decomposition} of $M$. By \cite{GI}, there is a one-to-one corres\-pon\-dence between isotopy classes of oriented contact structures on $M$ and open book decompositions up to positive stabilizations. By Theorem \ref{th4}, every algebraic knot $K$ in $L_{p,q}$ defines a fibration $\Psi \colon L_{p,q}\backslash K\to S^{1}$. Can we determine which contact structures on $L_{p,q}$ correspond to these fibrations? Does every contact structure on $L_{p,q}$ correspond to the fibration of some algebraic knot in $L_{p,q}$? Are there algebraic knots/links that determine the same contact structure? 
\item The smooth 4-genus of a classical algebraic link $K$ is known to be realized by the algebraic curve whose boundary is $K$. Is it possible to generalize this result to the algebraic links in lens spaces? The first problem we meet is a meaningful definition of the smooth 4-genus of lens space knots. As opposed to the 3-sphere, a typical lens space is not the boundary of the 4-ball (which may be conveniently completed to the complex projective plane). Here we need to move from the complex to the more general symplectic setting. The symplectic Thom conjecture, proven by Ozsv\'{a}th and Szab\'{o}, states that a symplectic surface in a symplectic 4-manifold is genus-minimizing in its homology class. The symplectic fillings of lens spaces have been investigated by Lisca \cite{LIS}, McDuff \cite{MCD} and recently by Etnyre and Roy \cite{ETN}. Suppose that $W_{p,q}$ is a symplectic filling of the lens space $L_{p,q}$, equipped with the standard contact structure. Define the smooth 4-genus of a knot $K$ in $L_{p,q}$ to be the minimum genus of a smooth surface embedded in $W_{p,q}$, whose boundary is $K$. Before stating this as a definition, of course, one should check whether this minimum genus depends on the choice of the symplectic filling $W_{p,q}$. If it does not, then we could explore further to see whether the symplectic Thom conjecture implies results about the smooth 4-genus of algebraic knots in $L_{p,q}$. 
\end{enumerate}

\section*{Acknowledgements}

The author was supported by the Slovenian Research Agency grant N1-0083.

\end{document}